\newcommand{\eps}{\varepsilon}
\newcommand{\R}{\mathbb R}
\newcommand{\Sphere}{\mathbb S}
\newcommand{\Z}{\mathbb Z}
\newcommand{\N}{\mathbb N}
\newcommand{\C}{C_0}
\newcommand{\E}{\mathcal{E}}
\newcommand{\m}{m}
\newcommand{\intd}{\, \mathrm{d}}
\newtheorem{theorem}{Theorem}
\newtheorem{proposition}[theorem]{Proposition}
\newtheorem{lemma}[theorem]{Lemma}
\newtheorem{corollary}[theorem]{Corollary}
\newtheorem{remark}[theorem]{Remark}
\numberwithin{equation}{section}
\numberwithin{theorem}{section}
\title{Existence of higher degree minimizers in the magnetic skyrmion problem}
\author{Cyrill B. Muratov \footnote{Dipartimento di Matematica,
    Universit\`a di Pisa, Largo Bruno Pontecorvo 5, 56127 Pisa, Italy;
    \\ Email: {\tt cyrill.muratov@unipi.it}} \and Theresa
  M. Simon\footnote{Institut f\"{u}r Analysis und Numerik,
    Universit\"{a}t M{\"u}nster, 48149 M{\"u}nster, Germany} \and
  Valeriy V. Slastikov\footnote{School of Mathematics, University of
    Bristol, Bristol BS8 1UG, United Kingdom}}
\date{\today}
\begin{document}
\maketitle

\abstract{We demonstrate existence of topologically nontrivial energy
  minimizing maps of a given positive degree from bounded domains in
  the plane to $\mathbb S^2$ in a variational model describing
  magnetizations in ultrathin ferromagnetic films with
  Dzyaloshinskii-Moriya interaction.  Our strategy is to insert tiny
  truncated Belavin-Polyakov profiles in carefully chosen
  locations of lower degree objects such that the total energy
  increase lies strictly below the expected Dirichlet energy
  contribution, ruling out loss of degree in the limits of minimizing
  sequences.  The argument requires that the domain be either
  sufficiently large or sufficiently slender to accommodate a
  prescribed degree. We also show that these higher degree minimizers
  concentrate on point-like skyrmionic configurations in a suitable
  parameter regime. \\

  \noindent {\it Keywords:} topological solitons, skyrmions,
  concentration phenomena, nanomagnetism \\
  
  \noindent {\it MSC 2020:} 58E15, 49S05, 35J57, 35Q99}

\section{Introduction}

Topological solitons are a central notion for a number of nonlinear
field theories arising as mathematical models of systems of very
different physical nature \cite{manton}. Broadly speaking, they are
certain special solutions of nonlinear partial differential equations
of a field theory in the whole space that, on one hand, are in a
certain sense localized and, on the other hand, exhibit a certain
degree of persistence among more general classes of
solutions. Topological solitons constitute the backbone of {\em
  topological defects}, which, in turn, are stable localized nonlinear
excitations of the topologically trivial background state in a
nonlinear system. The stability of these defects is closely related to
the topological character of topological solitons through the fact
that they cannot be smoothly deformed to the background state due to
topological obstruction.

A prime example of topological defects are Abrikosov vortices in
type-II superconductors, which may be described by the Ginzburg-Landau
theory \cite{tinkham}. Mathematically the vortex solution can already
be captured by the single Ginzburg-Landau equation for a
complex-valued field in the plane, with the individual vortex solution
in $\mathbb R^2$ providing an example of a topological
soliton. Starting with the studies in the applied mathematics
literature \cite{greenberg80,hagan82,neu90} (this and the subsequent
lists of references are not intended to be exhaustive), existence and
uniqueness of equivariant solutions (``radial'' solutions with
prescribed degree $d \in \mathbb Z$) was established in
\cite{chen94,herve94}. Furthermore, these solutions with $d = \pm 1$
were shown to be the only non-trivial locally minimizing solutions in
the sense of De Giorgi for the associated energy
\cite{mironescu96,sandier98}.

As for the topological defects within the Ginzburg-Landau theory, the
Dirichlet boundary data of non-trivial topological degree force the
existence of minimizers (of the same degree) exhibiting point-like
vortices in the domain interior, where the energy concentrates as a
coherence length parameter goes to zero \cite{bethuel}. One can obtain
a lot of information about minimizing solutions of the Ginzburg-Landau
energy, including locations and degrees of vortices, expansion of the
energy with respect to the coherence length parameter and fine
properties of minimizers
\cite{bethuel,mironescu95,jerrard02,pacard}. The connection between
the vortex solutions with degree $d = \pm 1$ in the whole plane and
the blowup limits of topological defects was established in
\cite{shafrir95}. There is a vast literature on the subject and many
questions are still unresolved, for further references and some open
problems see \cite{pacard,sandier,brezis23}.

We note that the whole space Ginzburg-Landau vortex solutions do not
actually represent global energy minimizers with prescribed
topological degree, as the Dirichlet energy contribution of these
solutions diverges logarithmically at infinity (however, compare with
\cite{almog09}). In search of the genuine global energy minimizing
topological solitons for field theories, various stabilization
mechanisms have been considered, starting with the model proposed by
Skyrme \cite{skyrme62}. For that model, existence of $\mathbb S^3$
valued topologically nontrivial energy minimizers in $\mathbb R^3$,
termed {\em skyrmions}, was established for degrees $d = \pm 1$
\cite{esteban86,esteban90,esteban92,esteban04,lin04a}. Another variant
of the Skyrme model in $\mathbb R^3$ was investigated in
\cite{faddeev97,lin04a,battye99}, where minimizers were found to exist
for an infinite subset of degrees $d \in \mathbb Z$. In $\mathbb R^2$,
Skyrme model with additional energy terms yields the so-called {\em
  baby skyrmions} as maps from the plane to $\mathbb S^2$ with degree
$d = \pm 1$ \cite{lin04,li11}. More recently, strong numerical
evidence was provided for the existence of {\em hopfions} as locally
energy minimizing maps from $\mathbb R^3$ to $\mathbb S^2$
\cite{rybakov22}, following the early work in \cite{bogolubsky88}, for
the energy containing higher derivative penalty terms in addition to
the Dirichlet energy.

Recently, a growing body of work has emerged with the studies of
chiral magnetic skyrmions, or simply {\em magnetic skyrmions} for
shorthand, motivated by the experimental discovery of these
configurations in chiral magnets and ultrathin ferromagnetic
heterostructures \cite{muhlbauer09,yu10,heinze11,romming13}. In
these systems, the skyrmion solutions are typically stabilized by a
chiral energy term called the Dzyaloshinskii-Moriya interaction (DMI),
which promotes rotations of the magnetization vector with values in
$\mathbb S^2$. Studies in the physics literature identified skyrmion
configurations as locally minimizing solutions of the micromagnetic
energy
\cite{bogdanov89,bogdanov89a,bogdanov94a,bogdanov99,rohart13,leonov16},
which makes them attractive candidates for information technology
applications \cite{kiselev11,nagaosa13,fert17,zhang20}.

Mathematical studies of chiral magnetic skyrmions go back to
\cite{melcher14}, which treated a model similar to the one in
\cite{lin04} and in which the original Skyrme term is replaced by a
DMI term appropriate for non-centrosymmetric cubic
materials.\footnote{In bulk chiral materials, the simplest form of the
  DMI energy density is given by a term proportional to
  $\m \cdot (\nabla \times \m)$, where $\m = (\m_1, \m_2, \m_3)$ and
  $\nabla = (\partial_1, \partial_2, \partial_3)$
  \cite{nagaosa13}. Although this is different from the form
  appropriate for ultrathin ferromagnetic heterostructures in which
  the DMI is of interfacial origin and its energy density is
  proportional to
  \mbox{$\m_3 \nabla' \cdot \m' - \m' \cdot \nabla' \m_3$}, where
  $\m = (\m', \m_3)$, $\m' = (\m_1, \m_2)$ and
  $\nabla' = (\partial_1, \partial_2)$ \cite{rohart13}, when
  $\m = \m(x_1, x_2)$ these two terms are equivalent up to a
  90$^\circ$ rotation around the $x_3$-axis, since
  $m \cdot (\nabla \times \m) = \tilde\m_3 \nabla' \cdot \tilde \m' -
  \tilde \m' \cdot \nabla' \tilde \m_3$ for $\tilde \m_1 = m_2$,
  $\tilde \m_2 = -\m_1$ and $\tilde \m_3 = \m_3$. For this reason, we
  can interpret the results of the studies of bulk chiral materials in
  two dimensions in terms of models of ultrathin ferromagnetic
  heterostructures.}  This paper established the existence of degree
$d = 1$ (in our convention) global energy minimizers in the whole
plane. Further studies of these and related minimizers can be found in
\cite{doring17,li18,komineas20,komineas21,gustafson21,bms:pnas22}. In
a model that is appropriate for ultrathin multilayer materials with
interfacial DMI existence results were obtained in
\cite{bms:arma21,bms:prb20,bfbsm:prb23}, which also incorporated the
non-local stabilizing effects of the stray field (see also the
ansatz-based and numerical studies in \cite{buttner18}). Degree
$d = 1$ minimizers were constructed in bounded domains in the plane
under confinement in \cite{mmss:cmp23}. Also, precise asymptotic
characterizations of the degree $d = 1$ energy minimizing solutions in
the conformal limit, in which the energy is asymptotically dominated
by the Dirichlet energy, have been obtained, showing that the energy
minimizers approach some particular shrinking Belavin-Polyakov (BP)
profiles \cite{doring17,bms:arma21, gustafson21, mmss:cmp23}.

\begin{figure}
  \centering
  \includegraphics[width=12cm]{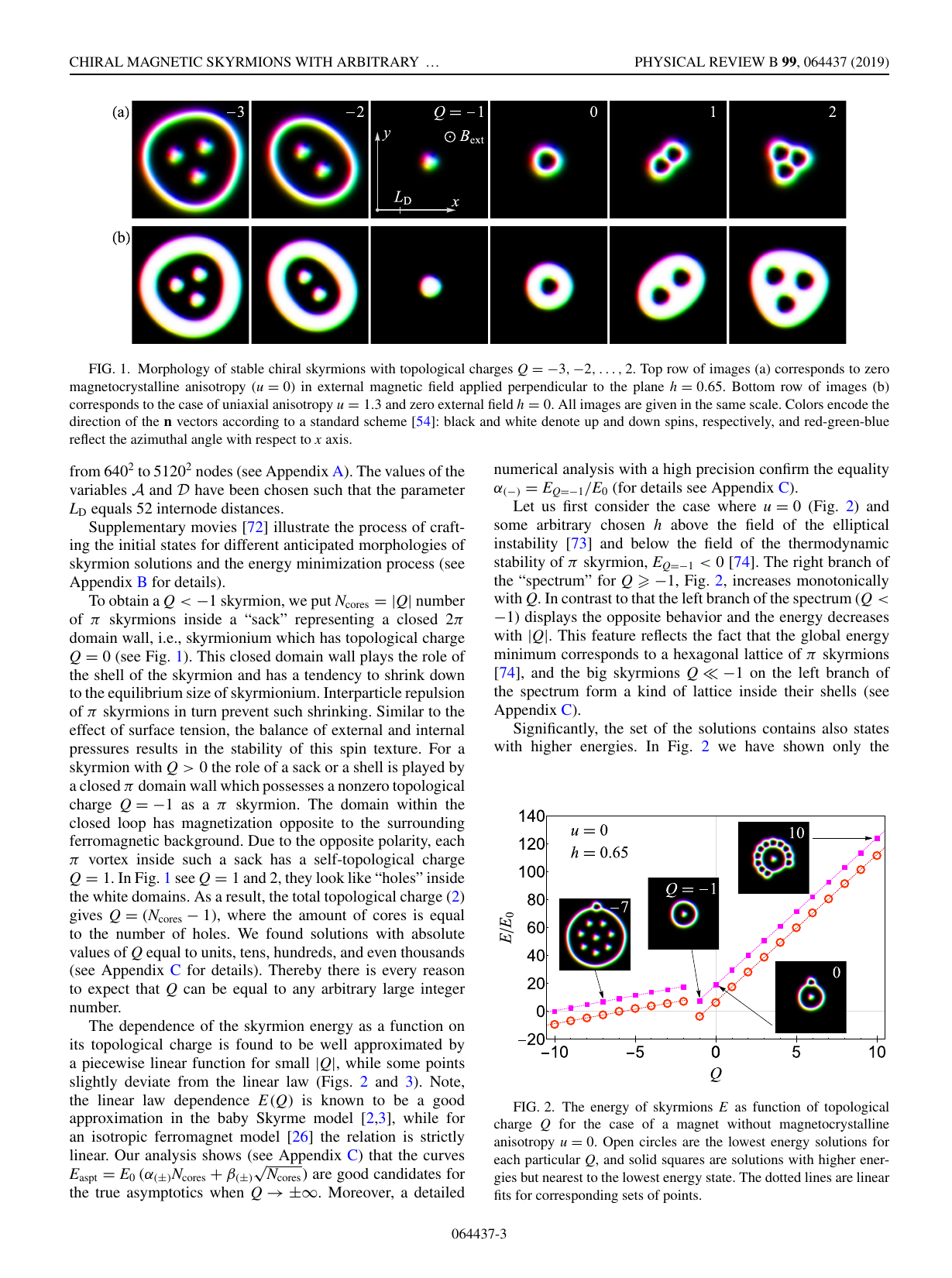}
  \caption{A series of numerical solutions of \eqref{eq:EL} with
    topological degree $d = 3, 2, 1, 0, -1, -2$, from left to right,
    obtained in \cite{rybakov19} (reproduced with permission). Black
    and white regions show the domains where $\m$ is predominantly
    down or up, respectively; the color indicates the direction of the
    in-plane component $\m'$ for intermediate values of $\m_3$. For
    $d = 1$, the direction of $\m'$ is parallel to that of the
    gradient of $\m_3$, which is the characteristic of the radial
    skyrmion solution.}
  \label{f:rybsols}
\end{figure}

In its simplest form, the model describing the magnetization
configurations $\m : \mathbb R^2 \to \mathbb S^2$ in ultrathin
ferromagnetic layers with perpendicular magnetic anisotropy and the
interfacial DMI starts with the energy functional in the form of
  the sum of the exchange (Dirichlet), DMI and the anisotropy energy
  terms:
\begin{align}
  \label{eq:Eintro}
  E(\m) = \int_{\mathbb R^2} \left( |\nabla \m|^2 +
  \kappa (\m_3 \nabla \cdot \m - \m' \cdot \nabla \m_3 ) + (Q - 1)
  |\m'|^2 \right) \intd x,
\end{align}
where for $m = (m_1, m_2, m_3)$ we use the convention $m = (m',m_3)$,
in which $m' = (m_1, m_2)$ is the in-plane component of the
magnetization. Here $\kappa \in \mathbb R$ and $Q \geq 1$ are
dimensionless material parameters (the DMI constant and the material's
quality factor respectively, see \cite{rohart13,bms:prb20} for the
explanation). The associated Euler-Lagrange equation can be easily
shown to be (see Proposition \ref{p:reg})
\begin{align}
  \label{eq:EL}
  \Delta \m
  & + \m  |\nabla \m|^2 + (Q - 1) (\m_3 e_3 - \m_3^2 \m) \notag
  \\
  & - 
    \kappa [(e_3 - \m_3 m) \nabla \cdot \m' - \nabla \m_3 + (\m' \cdot
    \nabla \m_3) \m ] = 0, 
\end{align}
distributionally, where the pure gradient term is understood as
$\nabla m_3 = (\partial_1 m_3, \partial_2 m_3, 0)$.  As this equation
is an $L^2_\mathrm{loc}(\mathbb R^2)$ perturbation of the harmonic map
equation from $\R^2$ to $\mathbb S^2$, its energy minimizing solutions
are known to be smooth \cite[Chapter 4]{moser}. We remark that in the
simplest case of the parameters $Q = 1$ and $\kappa = 0$ equation
\eqref{eq:EL} is just the harmonic map equation, whose solutions with
bounded energy on the whole of $\R^2$ had been completely
characterized \cite{lemaire78,wood74,eells78}. They are minimizers of
the energy in their respective homotopy classes determined by the
topological degree \cite{brezis83a,brezis95}
\begin{align}
  \label{eq:dintro}
  d = {1 \over 4 \pi} \int_{\R^2} \m \cdot (\partial_1 \m \times
  \partial_2 \m) \intd x \in \mathbb Z, 
\end{align}
which were first constructed in \cite{belavin75}. However, these
solutions do not qualify as topological solitons due to the conformal
invariance of the Dirichlet energy in $\R^2$ and hence the absence of
a common characteristic length scale. We also note that for
$\kappa = 0$ and $Q > 1$ equation \eqref{eq:EL} has no non-trivial
solutions by the Derrick-Pohozaev argument \cite{derrick64}, while for
$Q \geq 1$ and $\kappa \not= 0$ sufficiently large \eqref{eq:EL}
exhibits solutions in the form of spin spirals whose energy diverges
to $-\infty$ \cite{rohart13,ms:prsa17}. In contrast, for $Q > 1$ and
$\kappa \not= 0$ sufficiently small there is always a solution with
degree $d = 1$ that converges to $\m = -e_3$ at infinity and for whose
existence the DMI term is indispensable
\cite{bms:prb20,bms:arma21,gustafson21}. We remark, however, that at
the same time the interplay between the DMI energy and the topological
degree $d \not=1$ appears to be far from straightforward for this type
of profiles.

Numerical studies of \eqref{eq:EL} reveal a wealth of locally energy
minimizing solutions in $\mathbb R^2$ for various values of the
topological degree
\cite{bogdanov99,rybakov19,foster19,kuchkin20,kuchkin23}. For a sample
of the observed numerical solutions, see Fig. \ref{f:rybsols}. In
particular, the problem turns out to be considerably richer than its
Ginzburg-Landau counterpart, exhibiting a plethora of solutions beyond
a simple equivariant ``radial'' form first studied in
\cite{bogdanov89a,bogdanov94a}. Furthermore, much less is known about
the topologically nontrivial globally minimizing configurations. For
example, uniqueness and radial symmetry of the $d = 1$ minimizers for
$Q > 1$ are not known and are only asymptotically obtained in the
conformal limit $\kappa \to 0$ \cite{bms:arma21}. Furthermore, with
the exception of some very special choices of models yielding explicit
solutions via the Bogomolnyi trick \cite{barton-singer20} (see also
\cite{ibrahim23,hill21}), no existence for any other degree
$d \not= 1$ has been known up to now, not even under confinement. This
may be contrasted, for example, with the available results in
\cite{lin04a} in which an infinite subset of degrees (possibly all of
$\mathbb Z$) yields existence, and all degrees $d \in \mathbb Z$ yield
minimizers under confinement. This is because in the Skyrme mechanisms
the higher-order term in the energy prevents concentration and
collapse of the minimizing sequences and a subsequent loss of the
degree in the limit. In contrast, even under confinement the energy in
\eqref{eq:Eintro} generally allows for concentration \cite{lin99}, and
the question of existence of minimizers with prescribed degree is
genuinely non-trivial.

In this paper, we investigate existence of energy minimizing solutions
of \eqref{eq:EL} with prescribed degree $d \geq 1$ under confinement
in a bounded domain $\Omega \subset \R^2$ subject to the Dirichlet
boundary condition $\m = -e_3$ on $\partial \Omega$. This formulation
was used in our earlier work \cite{mmss:cmp23} to study degree $d = 1$
single skyrmion solutions and is relevant to ultrathin film
ferromagnetic materials in suitable parameter regimes
\cite{dms:mmmas24}. It is well suited for the study of multiple
skyrmions, similarly to the Dirichlet problem for Ginzburg-Landau
vortices \cite{bethuel}.  In terms of the energy minimizing
configurations for \eqref{eq:Eintro}, our paper is the first to
establish existence of higher degree magnetizations in the context of
multiple magnetic skyrmions on large, bounded domains for $Q > 1$. We
also establish existence of minimizers with higher degrees for $Q = 1$
for sufficiently slender domains characterized in terms of the
domain's optimal Poincar\'e constant.

Our existence results open up the question of how skyrmions
interact. At this point, it is not even clear whether multiple
degree-one skyrmions or single high-degree skyrmions could develop.
We do not address this issue here, which would require an analysis of
the splitting alternative in the concentration compactness on the
whole of $\R^2$. Our proof focuses instead on ruling out the vanishing
alternative in the concentration compactness on bounded domains. It
relies on a careful construction, inductively inserting a tiny,
truncated BP profile in a location where the degree $d-1$ minimizer is
almost constant and making sure that the energy increases by strictly
less than the additional contribution in the exchange energy.  This
estimate then allows us to rule out loss of degree in weak limits of
minimizing sequences.

A surprising amount of care needs to be taken in choosing the location
for insertion as a result of the rigidity of the harmonic map problem:
The error terms in the exchange energy when pasting together the lower
degree minimizer and the BP profile need to be dominated by the
gain in the DMI energy, which is of lower order.  Ideally, one would
thus choose a location where the local exchange contribution is small
at lower order.  We show this to be possible when our domain $\Omega$
is sufficiently large or sufficiently slender, using an appropriate
covering argument to handle the problem. Our existence result is
presented in Theorem \ref{thm:existence}. We note that numerical
evidence suggests that a given domain can only support minimizers with
the degree bounded above depending on the domain geometry.

As it stands, our existence result so far yields very little
information on the structure of the obtained solutions. As was already
noted, it would be natural to ask whether these minimizers may indeed
be interpreted as topological defects consisting of multiple
well-separated skyrmions. In analogy with the Ginzburg-Landau problem,
we therefore consider the limit behavior of the minimizers in which
the anisotropy term in the energy forces the magnetization to converge
to $\m = -e_3$ almost everywhere, which is achieved by sending the
parameter $Q$ to infinity. In this limit, we can prove that the
Dirichlet energy density of the minimizers concentrates on a quantized
atomic defect measure, see Theorem \ref{thm:asymptotics}. However, our
convergence result gives no further information on either the location
of the limit measure support or its quantized amplitudes, which
correspond to multiples of the degrees of the associated shrinking
bubbles.  At the heart of this issue lies the question of
\emph{interaction} between multiple skyrmions: Do they repel and
settle in distinct locations, or do they coalesce into a genuinely
higher degree object?  Mathematically, this requires a better
understanding of the rigidity properties of higher degree harmonic
maps expected to arise as the blowup limits above and their interplay
with the lower order terms in our energy, as the higher degree
situation \cite{rupflin23} is significantly more complex than in the
degree one case \cite{bms:arma21,hirsch21,topping21}.

The remainder of our paper is organized as follows. In Section
\ref{s:state}, we give the precise mathematical formulation of the
problem, state our main theorems and discuss how their conclusions
depend on various ingredients of the problem. In this section we also
give an outline of the arguments used in the proofs. In Section
\ref{sec:auxiliary-results}, we establish several technical results
that provide key ingredients in the proofs of the main
theorems. Finally, in Section \ref{s:proofs} we conclude the proofs.

\paragraph{Acknowledgements.}

C. B. Muratov was supported by MUR via PRIN 2022 PNRR project
P2022WJW9H and acknowledges the MUR Excellence Department Project
awarded to the Department of Mathematics, University of Pisa, CUP
I57G22000700001. The work of TMS was funded by the Deutsche
Forschungsgemeinschaft (DFG, German Research Foundation) under
Germany's Excellence Strategy EXC 2044 – 390685587, Mathematics
Münster: Dynamics--Geometry--Structure. C. B. Muratov is a member of
INdAM-GNAMPA.

\section{Statement of results}
\label{s:state}

We now give the precise mathematical statements of our results and
  outline our strategy of their proof.

\subsection{Mathematical setup}

Following the setup of our paper \cite{mmss:cmp23} on single skyrmions
on a bounded domain $\Omega \subset \R^2$ with Lipschitz boundary,
we wish to minimize the energy in \eqref{eq:Eintro} restricted to
  $\Omega$ under Dirichlet boundary condition $m = -e_3$ on
  $\partial \Omega$ \cite{dms:mmmas24}, which after an integration by parts
  can be equivalently defined as \cite{bms:arma21}
\begin{align}\label{def:energy}
  \E (\m) := \int_{\Omega} \left( |\nabla \m|^2 -2 \kappa 
  \m'\cdot \nabla m_3   + (Q-1) |\m'|^2 \right) \intd x. 
\end{align}
Passing from $\m = (m', m_3)$ to $\tilde \m := (-m', m_3)$ and noting
that the energy remains unchanged when replacing $m$ by $\tilde m$ and
changing the sign of $\kappa$, throughout the rest of the paper we may
assume without loss of generality that $\kappa >0$. 

For the energy in \eqref{def:energy} and a given $d \in \mathbb Z$ we
consider the set of admissible functions
\begin{align}
  \mathcal A_d := \left\{ \m \in H^1(\Omega; \Sphere^2) \ : \ \m= -
  e_3 \hbox{ on } \partial \Omega , \ \mathcal{N}(\m) =d \right\},
\end{align}
which satisfy a specific Dirichlet boundary condition and whose
topological degree $\mathcal N(m)$ is equal to $d$.  The degree of a
function $\m \in \mathring H^1(\R^2;\Sphere^2)$, where, as usual,
\begin{align}
    \mathring H^1(\R^2, \Sphere^2) := \left\{m \in H^1_{\mathrm{loc}}
  (\R^2; \R^3) :
  \int_{\R^2} |\nabla m|^2 \intd x < \infty, \ | m | = 1 \
  \text{a.e. in } \R^2 \right\},
\end{align} 
can be defined as \cite{brezis83a,brezis95}
\begin{align}
  \label{eq:N}
  \mathcal N (\m) =\frac{1}{4 \pi} \int_\Omega \m \cdot (\partial_1 \m
  \times \partial_2 \m) \intd x.
\end{align}
It is well known that $\mathcal N(\m) \in \Z$ for any
$\m \in \mathring H^1(\R^2;\Sphere^2)$, see \cite{brezis83a}.  To
apply this definition to our case of the bounded domain $\Omega$, we
extend $m\in \mathcal{A}_d$ to the whole of $\R^2$ by setting
$m = -e_3$ outside $\Omega$. Indeed, in the rest of this paper we will
not distinguish between $\m \in \mathcal A_d$ and its extension to the
whole of $\R^2$.

We furthermore denote the smallest Dirichlet eigenvalue of the domain
$\Omega$ associated with the optimal Poincar{\'e} constant of $\Omega$
by $\lambda_0 >0$. Hence for all $f\in W^{1,2}_0(\Omega)$, we have
\begin{align}
  \int_\Omega |f|^2 \intd x \leq \lambda_0^{-1} \int_\Omega |\nabla
  f|^2 \intd x. 
\end{align}
Throughout the rest of the paper, $C>0$ denotes a generic, universal
constant which may change from line to line, unless specified
otherwise. For simplicity, we also use the notation $B_r$ to
  denote the open ball of radius $r$ centered at the origin. 

\subsection{Main results}

Our main result is formulated in the following theorem, giving a
condition for existence of higher degree minimizers in terms of the
quantity
\begin{align}\label{def:alpha}
  \alpha(Q,\kappa) & :=  \frac{2 \kappa^2}{\sqrt{(Q-1)^2 +
                     4\lambda_0\kappa^2}+(Q-1)} 
\end{align}
being small enough and the area of $\Omega$ being large enough
compared to $\frac{d}{\kappa^2}$, see
Theorem~\ref{thm:existence}. Note that in both cases $Q=1$ and $Q>1$
we need $\kappa$ to be small compared, respectively, to
$\sqrt{\lambda_0}$ and $\sqrt{Q-1}$. This can be seen from the lower
bounds on $\alpha(Q,\kappa)$ in Lemma~\ref{lemma:alpha}. Notice also
that the quantity $\alpha(Q, \kappa)$ implicitly depends on $\Omega$
via the value of $\lambda_0$.

\begin{theorem}\label{thm:existence}
  Let $\Omega \subset \R^2$ be a bounded domain with Lipschitz
  boundary. There exists a universal constant $\overline{C}>0$ with
  the following property:
Let $\kappa >0$, $Q\geq 1$, and $d \in \N$ with
\begin{align}\label{eq:smallness_kappa_d}
  	\alpha(Q,\kappa) &
	 \leq \min\left\{ \frac{2}{d+1}, \frac{1}{2} \right\}.
\end{align}
If
\begin{align}\label{eq:cond_omega_big}
  |\Omega| \geq \frac{\overline{C} d}{\kappa^2},
\end{align}
then there exists a minimizer of
  $\E$ over $\mathcal{A}_d$.
\end{theorem}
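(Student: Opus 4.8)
The plan is to run the direct method, the only genuine difficulty being to rule out loss of the topological degree through concentration of Dirichlet energy along a minimizing sequence. Set $e_j := \inf_{\mathcal A_j} \E \in [0,+\infty]$ for $j \in \Z$ (nonnegative by ingredient (i) below). The argument rests on three ingredients, to be established in Section~\ref{sec:auxiliary-results}: \textbf{(i)} the coercivity and topological bound $\E(\m) \ge (1-\alpha)\int_\Omega \abs{\nabla\m}^2 \intd x \ge 8\pi(1-\alpha)\abs{\mathcal N(\m)}$, together with $\int_\Omega \abs{\m'}^2 \intd x \le 2\,\E(\m)/\max\{Q-1,\lambda_0\}$, valid for every $\m \in H^1(\Omega;\Sphere^2)$ with $\m = -e_3$ on $\partial\Omega$; \textbf{(ii)} a bubbling dichotomy for minimizing sequences of $\E$ on the bounded domain $\Omega$; and \textbf{(iii)} the insertion estimate
\begin{align}\label{eq:sketch-insertion}
  e_j \;\le\; e_{j-1} + 8\pi - \delta_j \qquad \text{for some } \delta_j = \delta_j(Q,\kappa,\Omega) > 0, \quad 1 \le j \le d,
\end{align}
which is where the hypotheses \eqref{eq:smallness_kappa_d}--\eqref{eq:cond_omega_big} enter (they hold for every degree $\le d$ once they hold for $d$). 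Granting \eqref{eq:sketch-insertion} and $e_0 = 0$ (attained by $\m \equiv -e_3$), a trivial induction using \eqref{eq:sketch-insertion} to pass from $j-1$ to $j$ yields, for $1 \le j \le d$: $\mathcal A_j \ne \emptyset$, the strict bound $e_j < 8\pi j$, and $e_j < e_k + 8\pi(j-k)$ for all $0 \le k < j$.

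\emph{The insertion estimate (main step).} Fix once and for all a large universal constant $\overline C$ and let $\m_\star \in \mathcal A_{j-1}$ satisfy $\E(\m_\star) \le e_{j-1} + \eta$; by (i), $\int_\Omega \abs{\nabla\m_\star}^2 \lesssim d$ and $\int_\Omega \abs{\m_\star'}^2 \lesssim d/\max\{Q-1,\lambda_0\}$. Pick $\rho > 0$ small (depending on all parameters) and set $R^2 := \sqrt{\overline C}\,\rho/\kappa$, so that $\rho/\kappa \ll R^2 \ll \overline C\rho/\kappa$ and $\rho \ll R$. Cover the bulk $\{x \in \Omega : \mathrm{dist}(x,\partial\Omega) > 2R\}$ (of measure $\ge \tfrac12\abs\Omega$ for $R$ small) by $N \sim \abs\Omega/R^2$ disjoint balls of radius $2R$, and average: since $\abs\Omega \ge \overline C d/\kappa^2$, some ball $B_{2R}(x_\star) \Subset \Omega$ satisfies $\int_{B_{2R}(x_\star)} \abs{\nabla\m_\star}^2 \lesssim dR^2/\abs\Omega \lesssim \kappa^2 R^2/\overline C \ll \kappa\rho$ and $\int_{B_{2R}(x_\star)} \abs{\m_\star'}^2 \lesssim dR^2/(\max\{Q-1,\lambda_0\}\abs\Omega) \lesssim (\kappa^2/\max\{Q-1,\lambda_0\})\,R^2/\overline C \ll R^2$, where the last step crucially uses $\kappa^2/\max\{Q-1,\lambda_0\} \le 2\alpha \le 1$, a consequence of \eqref{eq:smallness_kappa_d} and the definition \eqref{def:alpha}. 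A Fubini argument over concentric circles then produces a radius $r \in (R,2R)$ on which $\m_\star$ has oscillation $\ll 1$ and $\abs{\m_\star'} \ll 1$, hence is uniformly within $o(1)$ of a fixed pole $p \in \{\pm e_3\}$. Keeping $\m_\star$ outside $B_r(x_\star)$, we paste in, inside, a degree-one Belavin--Polyakov profile of radius $\rho$ and far-field value $p$, rotated in the plane to the chirality favoured by the DMI, truncated, and geodesically interpolated across thin annuli first to the constant $p$ and then to $\m_\star|_{\partial B_r(x_\star)}$. The resulting map $\m_j$ lies in $\mathcal A_j$ and
\begin{align}\label{eq:sketch-bookkeeping}
  \E(\m_j) \;\le\; \E(\m_\star) + 8\pi - c_0\kappa\rho + C\Bigl( \tfrac{\kappa^2 R^2}{\overline C} + \tfrac{\rho^2}{R^2} + (Q-1)\rho^2\log\tfrac R\rho \Bigr),
\end{align}
with $8\pi$ the Dirichlet energy of the bubble, $-c_0\kappa\rho$ (universal $c_0 > 0$) its DMI contribution — finite, since the integrand $\m'\cdot\nabla m_3$ of the rescaled profile decays like $\abs{x}^{-4}$ — and the error bracket collecting the removed/interpolated exchange energy of $\m_\star$, the truncation tail, and the anisotropy cost of the bubble, all three $o(\kappa\rho)$ for $\rho$ small and $\overline C$ large. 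Taking $\eta < \tfrac12 c_0\kappa\rho$ yields \eqref{eq:sketch-insertion} with $\delta_j = \tfrac12 c_0\kappa\rho$.

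\emph{Attainment.} Let $\m_n$ be a minimizing sequence for $\E$ over $\mathcal A_d$; by (i) it is bounded in $H^1(\Omega;\Sphere^2)$, so along a subsequence $\m_n \rightharpoonup \m_\infty$ weakly in $H^1$ and strongly in $L^2$, whence $\abs{\m_\infty} = 1$ a.e., $\m_\infty = -e_3$ on $\partial\Omega$ by trace compactness, and $\m_\infty \in \mathcal A_k$ with $k := \mathcal N(\m_\infty) \in \Z$. By the bubbling dichotomy (ii), $\abs{\nabla\m_n}^2 \intd x \rightharpoonup \abs{\nabla\m_\infty}^2 \intd x + \sum_i 8\pi\abs{d_i}\delta_{x_i}$ with finitely many $x_i \in \Omega$ (no concentration on $\partial\Omega$: rescaling about a boundary point would produce a harmonic sphere equal to $-e_3$ on a half-plane, hence constant) and $d_i \in \Z\setminus\{0\}$, $k + \sum_i d_i = d$; and since $\m_n' \to \m_\infty'$ strongly in $L^2$ while $\nabla m_{3,n} \rightharpoonup \nabla m_{3,\infty}$ weakly, the DMI and anisotropy terms pass to the limit, so
\begin{align}\label{eq:sketch-split}
  e_d \;=\; \E(\m_\infty) + 8\pi\sum_i\abs{d_i} \;\ge\; \max\bigl\{ e_k,\, 8\pi(1-\alpha)\abs k \bigr\} + 8\pi\abs{d-k}.
\end{align}
If $k = d$, then $\E(\m_\infty) \ge e_d$ forces $\sum_i\abs{d_i} = 0$ and $\E(\m_\infty) = e_d$, so $\m_\infty$ is the sought minimizer. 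Every other case contradicts $e_d < 8\pi d$: for $0 \le k \le d-1$, \eqref{eq:sketch-split} gives $e_d \ge e_k + 8\pi(d-k) > e_d$; for $k < 0$, $e_d \ge 8\pi(d-k) > 8\pi d$; and for $k \ge d+1$, $e_d \ge 8\pi\bigl((2-\alpha)k - d\bigr) \ge 8\pi\bigl((2-\alpha)(d+1)-d\bigr) = 8\pi\bigl(d+2-\alpha(d+1)\bigr) \ge 8\pi d$ by \eqref{eq:smallness_kappa_d}. Hence $k = d$ and the theorem follows.

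The main obstacle is the insertion estimate, and within it the choice of insertion site: by the conformal rigidity of the harmonic map problem the exchange energy of \emph{any} glued degree-$j$ competitor exceeds $e_{j-1}+8\pi$, so the strict gain must be extracted from the DMI term, which is only of order $\kappa\rho$ — lower order in the bubble radius than the naive gluing errors. Forcing those errors below $\kappa\rho$ requires an almost-flat, near-polar insertion region; producing one costs the volume condition $\abs\Omega \gtrsim d/\kappa^2$ and the covering/averaging argument above, with the smallness of $\alpha$ controlling the single place (the $L^2$ size of $\m'$) where the anisotropy and Poincar\'e constants must be played against $\kappa^2$. The case $Q = 1$ is covered by the same scheme, the anisotropy term being absent and $\max\{Q-1,\lambda_0\} = \lambda_0$.
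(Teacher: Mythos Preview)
Your overall strategy matches the paper's: direct method, an insertion estimate $e_j < e_{j-1}+8\pi$, and ruling out degree jumps of the weak limit. The construction behind your insertion estimate is essentially the paper's Lemma~\ref{lemma:construction}. The substantive difference is in the attainment step.

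You invoke as ingredient~(ii) a full bubbling dichotomy---atomic defect measure with quantized masses $8\pi|d_i|$, degree additivity $k+\sum d_i = d$, and energy identity $e_d=\E(m_\infty)+8\pi\sum|d_i|$---and then read off $e_d \ge e_k + 8\pi|d-k|$. This is considerably heavier machinery than the paper uses, is \emph{not} among the auxiliary results established in Section~\ref{sec:auxiliary-results}, and for a generic minimizing sequence of $\E$ (rather than of the pure Dirichlet energy) requires real work to justify. The paper instead uses the elementary Bogomolnyi identity
\[
  \int_\Omega |\nabla m|^2 \pm 8\pi\,\mathcal N(m)
  \;=\; \int_\Omega |\partial_1 m \mp m\times\partial_2 m|^2 \intd x,
\]
whose right-hand side is weakly lower semicontinuous by a weak-times-strong argument. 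This gives $\E(m_\infty)\pm 8\pi k \le e_d \pm 8\pi d$ directly, hence $e_d \ge e_k + 8\pi|d-k|$ for the weak limit's degree $k$---exactly your \eqref{eq:sketch-split}, but with no bubble analysis at all. Both routes then finish identically.

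There is also a smaller difference in the covering step. The paper uses the Hardy--Littlewood maximal function and controls $\max\{\lambda_0,Q-1\}\,|m+e_3|^2$, which forces the insertion pole to be $-e_3$ and, when $Q-1>\lambda_0$, requires the extra bound $\int(1+m_3)^2\lesssim d^2\kappa^2/(Q-1)^2$ (this is where the quantity $\beta$ in Lemma~\ref{lemma:covering} enters). You use a single-scale pigeonhole on $|\nabla m|^2$ and $|m'|^2$ and allow either pole $p=\pm e_3$; this is arguably more direct and sidesteps the $\beta$ dichotomy, at the price of checking that the DMI gain survives for the $p=+e_3$ profile as well.
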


A standard consequence of the minimality of $\E$ is that the minimizer
solves the Euler-Lagrange equation in \eqref{eq:EL}. More precisely,
we have the following result.

\begin{proposition}
  \label{p:reg}
  Under the assumptions of Theorem \ref{thm:existence}, let $m$ be a
  minimizer of $\E$ over $\mathcal A_d$. Then
  $m \in C^\infty(\Omega; \R^3)$, $|m| = 1$ in $\Omega$, and $m$
  satisfies \eqref{eq:EL} classically in $\Omega$. If, furthermore,
  $\Omega$ is a simply connected, bounded, open set with boundary of
  class $C^{1,\alpha}$ for some $\alpha > 0$, then
  $m \in C^\infty(\Omega; \R^3) \cap C(\overline \Omega; \R^3)$.
\end{proposition}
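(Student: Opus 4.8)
The plan is to derive the weak form of \eqref{eq:EL} from minimality, to upgrade it to interior smoothness using the regularity theory for $L^2_{\mathrm{loc}}$ perturbations of harmonic maps into $\Sphere^2$, and then to obtain continuity up to $\partial\Omega$ through a conformal change of variables combined with a reflection across the boundary exploiting the constant Dirichlet data. For the \emph{first variation}, take a minimizer $m$ and $\phi \in C_c^\infty(\Omega;\R^3)$, and for $|t|$ small set $m_t := (m+t\phi)/|m+t\phi| \in H^1(\Omega;\Sphere^2)$. This agrees with $-e_3$ near $\partial\Omega$, and since $m_t \to m$ uniformly while $\nabla m_t \to \nabla m$ in $L^2$ as $t \to 0$, one has $\mathcal N(m_t) \to \mathcal N(m) = d$, hence $\mathcal N(m_t) = d$ because the degree is integer valued; thus $m_t \in \mathcal A_d$. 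Consequently $t \mapsto \E(m_t)$, which is $C^1$ near $0$, has a minimum at $t = 0$ and its derivative vanishes there. Differentiating \eqref{def:energy} and integrating by parts — the variation $\tfrac{d}{dt}\big|_0 m_t = \phi - (\phi\cdot m)m$ is compactly supported in $\Omega$ — gives an identity which, since $\phi - (\phi\cdot m)m$ exhausts all smooth compactly supported fields tangent to $\Sphere^2$ along $m$ and since the left-hand side of \eqref{eq:EL} is precisely the tangential projection onto $T_m\Sphere^2$ of the corresponding Euler--Lagrange field, is \eqref{eq:EL} in the distributional sense.

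For the \emph{interior smoothness}, note that since $|m| = 1$ a.e. all coefficients multiplying $\nabla m$ in \eqref{eq:EL} as well as the zeroth-order anisotropy contribution are bounded, while $\nabla m \in L^2_{\mathrm{loc}}$; hence \eqref{eq:EL} reads $-\Delta m = m|\nabla m|^2 + f$ with $f \in L^2_{\mathrm{loc}}(\Omega;\R^3)$, an $L^2_{\mathrm{loc}}$ perturbation of the $\Sphere^2$-harmonic map system. In two dimensions this forces $m \in C^\infty(\Omega;\R^3)$ by \cite[Chapter 4]{moser}; continuity of $m$ then makes $|m| = 1$ hold everywhere in $\Omega$, and \eqref{eq:EL} holds classically.

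For \emph{continuity up to the boundary}, assume in addition that $\Omega$ is simply connected with $\partial\Omega \in C^{1,\alpha}$. By the Riemann mapping theorem and the Kellogg--Warschawski boundary regularity of conformal maps there is a conformal diffeomorphism $\Phi : \overline{B_1} \to \overline\Omega$ with $\Phi \in C^{1,\alpha}$ and $|\Phi'|$ bounded above and below on $\overline{B_1}$. Then $\tilde m := m\circ\Phi \in H^1(B_1;\Sphere^2)$ equals $-e_3$ on $\partial B_1$ and solves in $B_1$ the Euler--Lagrange equation of the pulled-back energy; by conformal invariance of the Dirichlet integral this is again $-\Delta\tilde m = \tilde m|\nabla\tilde m|^2 + \tilde f$, with $\tilde f$ collecting the bounded conformal factor and, from the DMI term (which being chiral is not conformally covariant), an additional bounded rotation factor built from $\Phi'$, so $\tilde f$ is still an $L^2_{\mathrm{loc}}(B_1)$ perturbation with coefficients bounded up to $\partial B_1$. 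With $\rho := \mathrm{diag}(-1,-1,1)$, the rotation by $\pi$ about the $e_3$-axis — an isometry of $\Sphere^2$ fixing $\pm e_3$ whose $(-1)$-eigenspace is $\{m_3 = 0\}$ — one extends $\tilde m$ by $\hat m(y) := \rho\,\tilde m(y/|y|^2)$ for $1 < |y| < 2$. Since inversion is conformal and $\rho$ an isometry of the target, $\hat m$ solves a system of the same type on $\{1 < |y| < 2\}$; its trace on $\partial B_1$ equals $\rho(-e_3) = -e_3$, matching that of $\tilde m$, so $\hat m \in H^1(B_2;\Sphere^2)$; and because $|\tilde m| = 1$ together with $\tilde m \equiv -e_3$ on $\partial B_1$ forces the normal derivative of $\tilde m_3$ to vanish on $\partial B_1$, the conormal derivatives of $\tilde m$ and $\hat m$ match across $\partial B_1$, producing no distributional contribution there, so $\hat m$ solves an $L^2_{\mathrm{loc}}$-perturbed harmonic map system on all of $B_2$. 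By the interior theory already invoked, $\hat m \in C^0(B_2)$, hence $\tilde m \in C(\overline{B_1})$, and composing with $\Phi^{-1} \in C(\overline\Omega)$ gives $m \in C(\overline\Omega;\R^3)$.

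The main difficulty is this last step: one must make the conformally transformed lower-order term explicit enough to confirm that it remains an admissible bounded perturbation despite the chirality of the DMI, and — more importantly — justify rigorously that $\hat m$ is a genuine weak solution across $\partial B_1$, which relies on the vanishing of the normal derivative of $\tilde m_3$ on $\partial B_1$ (a consequence of the pointwise constraint $|\tilde m| = 1$ and the constant Dirichlet data) and has to be phrased via a distributional integration by parts, since at that stage traces of $\nabla\tilde m$ are not yet known to be functions. The fact that $\Phi'$ is only $C^{0,\alpha}$ up to $\partial B_1$, because $\partial\Omega$ is only $C^{1,\alpha}$, is also why one should not expect better than continuity of $m$ up to $\partial\Omega$.
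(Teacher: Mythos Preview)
Your derivation of the weak Euler--Lagrange equation and the appeal to \cite[Chapter~4]{moser} for interior smoothness are correct and coincide with the paper's proof. For boundary continuity the paper proceeds differently: it does not reflect, but simply invokes a boundary regularity result for two-dimensional systems of this type, namely \cite[Theorem~1.1, Remark~1.4 and Corollary~1.6]{mueller09}, which applies once $\Omega$ is simply connected with $C^{1,\alpha}$ boundary.

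Your reflection argument is a reasonable and ultimately viable alternative, but as written it has a gap at exactly the point you flag. You claim $\hat m$ is a weak solution on all of $B_2$ because the conormal derivatives match across $\partial B_1$, relying on $\partial_r\tilde m_3=0$ there; but this is a statement about the trace of $\nabla\tilde m$ on $\partial B_1$, whose existence is precisely the boundary regularity you are trying to establish, so the reasoning is circular. Saying it ``has to be phrased via a distributional integration by parts'' identifies the difficulty without resolving it: integrating by parts on each side still produces boundary integrals of $\partial_r\tilde m$. The repair is to avoid boundary terms altogether. After the conformal change of variables, testing $\hat m$ against $\phi\in C_c^\infty(B_2;\R^3)$ amounts to testing $\tilde m$ on $B_1$ against $\phi+\psi$ with $\psi(z)=\rho\,\phi(z/|z|^2)$; on $\partial B_1$ one has $(\phi+\psi)|_{\partial B_1}=(\mathrm{Id}+\rho)\phi=(0,0,2\phi_3)$, which is \emph{parallel to} $\tilde m=-e_3$. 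Writing $\phi+\psi=h\tilde m+\zeta$ with $h$ smooth scalar and $\zeta\in H^1_0(B_1)\cap L^\infty$, the weak equation holds for $\zeta$ by density, while for $h\tilde m$ the pointwise identity $\nabla\tilde m:\nabla(h\tilde m)=h|\nabla\tilde m|^2$ (from $\tilde m\cdot\partial_i\tilde m=0$) together with $\tilde f\cdot\tilde m=0$ makes it a tautology. This delivers the weak equation for $\hat m$ on $B_2$ without ever invoking traces of $\nabla\tilde m$, and then interior regularity finishes as you say. That is the substance behind your heuristic, but it is not the argument you wrote.
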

\noindent We also note that the above proposition applies to every
critical point of $\E$ in $\mathcal A_d$ regardless of the choice of
parameters.  In fact, as can be seen from the proof of Proposition
\ref{p:reg}, just stationarity with respect to smooth outer variations
is sufficient, as in two-dimensional domains equation \eqref{eq:EL}
has a good regularity theory.
 
The statement of Theorem \ref{thm:existence} may be simplified in two
extreme cases to yield a more explicit dependence on the domain
$\Omega$. The first case corresponds to the value of $Q >1$ fixed and
the domain being sufficiently large, so that the value of $\lambda_0$
is negligible in the definition of $\alpha(Q, \kappa)$. Since
\begin{align} \label{est:al}
    \begin{split}
      \alpha(Q,\kappa) \leq \frac{\kappa^2}{Q-1}
    \end{split}
\end{align}
for all $Q > 1$ and $\kappa > 0$, Theorem \ref{thm:existence}
immediately yields the following result.
 
\begin{corollary} \label{c:Q} Let $d \in \mathbb N$, $Q > 1$ and
  $0 < \kappa < \sqrt{(Q - 1) \min \left\{ {2 \over d + 1}, \frac12
    \right\} }$. Then there exists a minimizer of $\E$ over
  $\mathcal A_d$ for all
  $|\Omega| \geq \frac{\overline{C} d}{ \kappa^2}$, for some
  $\overline{C} > 0$ universal.
\end{corollary}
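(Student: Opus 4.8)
The plan is to obtain Corollary~\ref{c:Q} as a direct specialization of Theorem~\ref{thm:existence}, the only point needing verification being that the stated bound on $\kappa$ forces the smallness condition \eqref{eq:smallness_kappa_d}. First I would record the elementary estimate \eqref{est:al}: since $Q>1$, $\lambda_0>0$ and $\kappa>0$, one has $\sqrt{(Q-1)^2+4\lambda_0\kappa^2}\geq Q-1$, so the denominator in the definition \eqref{def:alpha} of $\alpha(Q,\kappa)$ is at least $2(Q-1)$, giving $\alpha(Q,\kappa)\leq \kappa^2/(Q-1)$ uniformly in $\Omega$. Then, assuming $0<\kappa<\sqrt{(Q-1)\min\{2/(d+1),\,1/2\}}$, squaring and dividing by $Q-1>0$ yields $\kappa^2/(Q-1)<\min\{2/(d+1),\,1/2\}$, and combining this with \eqref{est:al} gives $\alpha(Q,\kappa)\leq \kappa^2/(Q-1)\leq\min\{2/(d+1),\,1/2\}$, i.e.\ \eqref{eq:smallness_kappa_d} holds.

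Next, for any bounded Lipschitz domain $\Omega\subset\R^2$ with $|\Omega|\geq \overline{C}d/\kappa^2$, the second hypothesis \eqref{eq:cond_omega_big} of Theorem~\ref{thm:existence} is met with the universal constant $\overline{C}$ furnished by that theorem. Applying Theorem~\ref{thm:existence} then produces a minimizer of $\E$ over $\mathcal{A}_d$, which is the assertion of the corollary. I expect no genuine obstacle here: all the substance is contained in Theorem~\ref{thm:existence}, and the role of \eqref{est:al} is merely to eliminate the implicit $\Omega$-dependence of $\alpha(Q,\kappa)$ through $\lambda_0$, so that the hypothesis on $\kappa$ can be phrased without reference to the Poincar\'e constant. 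The only mild care needed is to observe that the bound \eqref{est:al}, and hence the verification of \eqref{eq:smallness_kappa_d}, is uniform over all admissible domains, so that the same constant $\overline{C}$ applies simultaneously to every $\Omega$ in the stated range.
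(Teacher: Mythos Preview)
Your proposal is correct and matches the paper's own argument exactly: the paper simply notes the estimate \eqref{est:al} and states that Theorem~\ref{thm:existence} then immediately yields Corollary~\ref{c:Q}. Your write-up spells out the trivial verification that the hypothesis on $\kappa$ plus \eqref{est:al} gives \eqref{eq:smallness_kappa_d}, which is all that is needed.
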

This corollary implies, in particular, that given $d \in \mathbb N$
and $Q > 1$ fixed, given $\kappa > 0$ sufficiently small depending on
$d$ and $Q$, and given a bounded domain $\Omega_0 \subset \R^2$, there
exists a scale factor $s_0 > 0$ sufficiently large depending on $d$,
$\kappa$ and $\Omega_0$ such that for all $s > s_0$ and all
$\Omega = s \Omega_0$ the energy $\E$ admits a minimizer in the
admissible class $\mathcal A_d$. In particular, this is true for any
$d \in \mathbb N$ in the simplest case of $\Omega_0$ being a
disk.

On the other hand, for $d \gg 1$ the corollary gives existence only
for $\kappa < C \sqrt{(Q - 1) / d}$, implying that we must have
$|\Omega| \geq C' d^2 / (Q - 1)$, for some $C, C' > 0$
universal. Notice that this scaling of $|\Omega|$ with $d$ is
consistent with configurations in which the skyrmions line the domain
boundary, one skyrmion per natural length scale $l = 1/\sqrt{Q - 1}$
of a single skyrmion. Nevertheless, one would rather expect that the
skyrmions fill the interior of the domain $\Omega$ uniformly, which
would yield a different scaling $|\Omega| \geq C d / (Q - 1)$, for
some $C > 0$ universal. Our methods are too coarse to discriminate
between these two possibilities, which would require to rule out
preferential placement of skyrmions close to the domain boundary.

In the opposite extreme of $Q = 1$ we have instead the following
corollary of Theorem \ref{thm:existence}.

\begin{corollary} \label{c:1} Let $d \in \mathbb N$, $Q = 1$ and
  $0 < \kappa < \sqrt{\lambda_0} \min \left\{ {2 \over d + 1}, \frac12
  \right\}$. Then there exists a minimizer of $\E$ over $\mathcal A_d$
  for all $|\Omega| \geq \frac{\overline{C} d }{ \kappa^2}$, for some
  $\overline{C} > 0$ universal.
\end{corollary}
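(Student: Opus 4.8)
The plan is to obtain this as an immediate specialization of Theorem~\ref{thm:existence} to $Q = 1$, the only content being the evaluation of the quantity $\alpha(Q,\kappa)$ from \eqref{def:alpha} at $Q = 1$. Setting $Q = 1$ annihilates both occurrences of $Q-1$ in \eqref{def:alpha}, so that, using $\kappa > 0$ and $\lambda_0 > 0$,
\begin{align}
  \alpha(1,\kappa) = \frac{2\kappa^2}{\sqrt{4\lambda_0\kappa^2}}
  = \frac{2\kappa^2}{2\kappa\sqrt{\lambda_0}} = \frac{\kappa}{\sqrt{\lambda_0}}.
\end{align}
First I would record this identity.

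Next I would observe that the hypothesis $0 < \kappa < \sqrt{\lambda_0}\,\min\{2/(d+1),\,1/2\}$ is, after dividing through by $\sqrt{\lambda_0} > 0$, precisely the statement $\alpha(1,\kappa) < \min\{2/(d+1),\,1/2\}$. In particular the smallness condition \eqref{eq:smallness_kappa_d} of Theorem~\ref{thm:existence} holds (with strict inequality, hence a fortiori in the non-strict form required there). With \eqref{eq:smallness_kappa_d} verified, Theorem~\ref{thm:existence} supplies a universal constant $\overline{C} > 0$ — the very same one appearing in the statement of the corollary — such that $\E$ admits a minimizer over $\mathcal{A}_d$ whenever $|\Omega| \geq \overline{C} d / \kappa^2$. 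This is exactly the claimed conclusion, so the proof is complete.

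Since this is a direct specialization, there is no genuine obstacle; the only points requiring attention are the algebraic simplification of $\alpha(1,\kappa)$ and making sure the stated threshold on $\kappa$ is expressed in terms of $\sqrt{\lambda_0}$ consistently with the definition of $\lambda_0$ as the smallest Dirichlet eigenvalue (optimal Poincaré constant) of $\Omega$. As a consistency check one could also compare with the lower bound on $\alpha(Q,\kappa)$ from Lemma~\ref{lemma:alpha}, which confirms that for $Q = 1$ the relevant scale is $\kappa/\sqrt{\lambda_0}$, but this is not needed for the argument.
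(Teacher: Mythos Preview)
Your proof is correct and matches the paper's approach exactly: the corollary is obtained by specializing Theorem~\ref{thm:existence} to $Q=1$, using that $\alpha(1,\kappa)=\kappa/\sqrt{\lambda_0}$ (which the paper also notes explicitly in the remark after Lemma~\ref{lemma:alpha}). There is nothing to add.
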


We note that in the case $Q = 1$ we cannot ensure existence of
minimizers by rescaling $s\Omega$ with some large scale factor $s > 0$
since doing so decreases $\lambda_0$, which feeds back into the
smallness condition for $\kappa$.  In particular, existence of
minimizers depends on the \emph{shape} of $\Omega$, not only on its
area as in Corollary \ref{c:Q}. This point may be illustrated by
considering $\Omega$ to be a strip of width 1 and varying length
$L \geq 1$, i.e.  $\Omega = (0,L) \times (0, 1)$. Clearly in this case
the value of $\lambda_0$ may be bounded below by a universal
  constant uniformly in $L$. Thus, we have
existence for all $0 < \kappa < C / d$ for some $C > 0$ universal and
$L\geq L_0$ with $L_0 =\overline{C} d/\kappa^2$. However, our
methods do not allow to conclude whether there is a minimizer of $\E$
over $\mathcal A_d$ for any $d > 1$ in the case $Q = 1$ and
$L = 1$, no matter what the value of $\kappa$ is, as there is
currently no reasonable quantitative information on the value of the
universal constant $\overline{C}$ in the statement of Theorem
\ref{thm:existence}. Similarly, contrary to the case $Q > 1$ we do
not know if Corollary \ref{c:1} ever applies with $d > 1$ and $Q = 1$
when $\Omega$ is a disk of any given radius. This should be
contrasted with the result in \cite{mmss:cmp23} for $d = 1$, which
yields existence of minimizers in this case for an arbitrary domain
$\Omega$, provided $\kappa$ is sufficiently small. Also notice that
for $d \gg 1$ and $\Omega$ in the form of the strip as above,
Corollary \ref{c:1} only yields existence for $L \geq C d^3$ with some
$C > 0$ universal, while we expect that existence may hold as soon as
$L \geq C d$.

We now let $d \in \mathbb N$, $\kappa >0$ and the domain $\Omega$
satisfying condition \eqref{eq:cond_omega_big} be fixed and consider
the limit $Q\to \infty$, in which minimizers of $\E$ over
$\mathcal A_d$ are expected to concentrate. In view of \eqref{est:al},
we have existence for any $Q > 1$ sufficiently big.  We may thus
analyze the asymptotic behaviour of the minimizers in this regime.
For technical reasons, we additionally assume $\Omega$ to be simply
connected and have a $C^{1,\alpha}$ boundary.
\begin{theorem}\label{thm:asymptotics}
  Let $d\in \N$ and $\kappa > 0$, and let $\Omega \subset \R^2$ be a
  bounded, open, simply connected domain with a $C^{1,\alpha}$
  boundary for some $\alpha > 0$, satisfying
  \begin{align}\label{con:om}
    |\Omega| \geq {\frac{\overline{C}  d}{\kappa^2}},
  \end{align}
  where $\overline{C}$ is as in Theorem \ref{thm:existence}.  Then
  for each $Q > 1$ large enough there exists a minimizer of $\E$ over
  $\mathcal A_d$.  Furthermore, if $(Q_n)$ is a sequence such that
  $Q_n \to \infty$ as $n \to \infty$, and $m_n$ is a minimizer of $\E$
  with $Q = Q_n$ over $\mathcal A_d$, there exist $k\in \N$ with
  $k\leq d$, $x_1,\ldots, x_k \in \overline{\Omega}$ distinct and
  $d_1,\ldots, d_k \in \N$ with $\sum_{j=1}^k d_j = d$ such that as
  $n \to \infty$ we have
  \begin{align}
    m_n & \rightharpoonup -e_3 \qquad \text{in} \
          W^{1,2}(\Omega;\R^3), 
  \end{align}
  and for
  $d \mu_n := \left( |\nabla m_n|^2 - 2 \kappa m' \cdot \nabla m_3 + (Q
    - 1) |m'|^2 \right) \intd x$ there holds
  \begin{align}
    \label{eq:8piddxj}
    \mu_n \stackrel{*}{\rightharpoonup}
    \sum_{j=1}^k 8\pi {d}_j \delta_{x_j} 
  \end{align}
  in the sense of measures, possibly up to extraction of a
  subsequence.
\end{theorem}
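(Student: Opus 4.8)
The plan is to establish sharp energy asymptotics and then run a concentration--compactness argument. Write $D_n:=\int_\Omega|\nabla m_n|^2\intd x$ and $a_n:=\int_\Omega|m_n'|^2\intd x$. For $Q$ large one has $\alpha(Q,\kappa)\le\kappa^2/(Q-1)$ by \eqref{est:al}, so \eqref{eq:smallness_kappa_d} holds and Theorem~\ref{thm:existence} gives a minimizer. For the energy \emph{upper bound} I would fix a ball $B_R(x_0)\subset\Omega$, take a degree-$d$ Belavin--Polyakov map $\Phi\colon\R^2\to\Sphere^2$ with $\Phi\to-e_3$ at infinity and $\int_{\R^2}|\nabla\Phi|^2=8\pi d$, rescale it to scale $\rho$, truncate it inside $B_R(x_0)$ by geodesic interpolation to $-e_3$ on a dyadic annulus, and extend by $-e_3$; for $\rho$ small the resulting map lies in $\mathcal A_d$ (its degree is an integer within $o_\rho(1)$ of $d$) and has Dirichlet energy $8\pi d+o_\rho(1)$, DMI energy $O(\kappa\rho\sqrt{\log(1/\rho)})$ and anisotropy energy $O((Q-1)\rho^2\log(1/\rho))$, so the choice $\rho=\rho_n:=(Q_n-1)^{-1}$ yields $\E(m_n)\le8\pi d+o(1)$. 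For the \emph{lower bound}, the Bogomolny identity $|\partial_1 m+m\times\partial_2 m|^2=|\nabla m|^2-2m\cdot(\partial_1 m\times\partial_2 m)\ge0$ gives $D_n\ge8\pi\mathcal N(m_n)=8\pi d$, and estimating $-2\kappa\int_\Omega m_n'\cdot\nabla m_{n,3}\ge-2\kappa\sqrt{a_nD_n}$ and minimizing the convex function $t\mapsto D_n-2\kappa\sqrt{tD_n}+(Q_n-1)t$ over $t\ge0$ gives $\E(m_n)\ge(1-\kappa^2/(Q_n-1))D_n$. Hence $\E(m_n)\to8\pi d$ and $D_n\to8\pi d$, and reinserting into the energy identity yields $(Q_n-1)a_n\to0$, in particular $a_n\to0$. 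Since $m_{n,3}+1\in W^{1,2}_0(\Omega)$ is bounded in $W^{1,2}$ and $m_{n,3}^2=1-|m_n'|^2\to1$ in $L^1$, a subsequence of $m_{n,3}$ converges in $L^2$ to a $\{-1,1\}$-valued function in $W^{1,2}_0(\Omega)$, necessarily $\equiv-1$; thus $m_n\rightharpoonup-e_3$ in $W^{1,2}(\Omega;\R^3)$ (along the full sequence, by a standard subsequence argument), and $\int_\Omega|\partial_1 m_n+m_n\times\partial_2 m_n|^2=D_n-8\pi d\to0$, i.e.\ the minimizers are asymptotically holomorphic.

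Next I would analyze the concentration. Passing to a subsequence, $|\nabla m_n|^2\intd x\stackrel{*}{\rightharpoonup}\lambda$ and $m_n\cdot(\partial_1 m_n\times\partial_2 m_n)\intd x\stackrel{*}{\rightharpoonup}\sigma$ on $\overline\Omega$, with $\lambda(\overline\Omega)=8\pi d$ and $\sigma(\overline\Omega)=4\pi d$; from $|\nabla m_n|^2=2\,m_n\cdot(\partial_1 m_n\times\partial_2 m_n)+|\partial_1 m_n+m_n\times\partial_2 m_n|^2$ and the previous step one gets $\lambda=2\sigma\ge0$. The measures $d\mu_n$ and $|\nabla m_n|^2\intd x$ differ by $(-2\kappa m_n'\cdot\nabla m_{n,3}+(Q_n-1)|m_n'|^2)\intd x$, of total variation at most $2\kappa\sqrt{a_nD_n}+(Q_n-1)a_n\to0$, so $\mu_n\stackrel{*}{\rightharpoonup}\lambda$ and it remains to show $\lambda=\sum_{j=1}^k8\pi d_j\delta_{x_j}$. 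By an $\eps$-regularity argument for the Euler--Lagrange equation \eqref{eq:EL} there is $\eps_0>0$ such that wherever $\liminf_n\int_{B_r(x)\cap\Omega}|\nabla m_n|^2\intd x<\eps_0$ for some $r>0$ one has $m_n\to-e_3$ strongly in $W^{1,2}$ near $x$; consequently $\lambda$ is carried by a finite set $\{x_1,\dots,x_k\}$ with $1\le k\le8\pi d/\eps_0$, $\theta_j:=\lambda(\{x_j\})\ge\eps_0$, $c_j:=\sigma(\{x_j\})=\theta_j/2>0$, and $m_n\to-e_3$ in $W^{1,2}_{\mathrm{loc}}(\overline\Omega\setminus\{x_1,\dots,x_k\})$.

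It remains to quantize the atoms and count degrees. Fix $j$ and $r$ small so that $\overline{B_{2r}(x_j)}$ meets $\{x_1,\dots,x_k\}$ only at $x_j$; by the strong convergence off the atoms and Fubini there is $\varrho\in(r/2,r)$ with $m_n|_{\partial B_\varrho(x_j)}\to-e_3$ in $W^{1,2}(\partial B_\varrho(x_j))$ (using the extension of $m_n$ by $-e_3$ if $x_j\in\partial\Omega$), so for $n$ large this trace maps into a small geodesic ball about $-e_3$ and extends to a map $\widetilde m_n\colon B_\varrho(x_j)\to\Sphere^2$ with $\int_{B_\varrho(x_j)}|\nabla\widetilde m_n|^2\to0$. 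Gluing $m_n$ and $\widetilde m_n$ along $\partial B_\varrho(x_j)$ produces a sphere-valued map of integer degree, so $\tfrac1{4\pi}\int_{B_\varrho(x_j)}m_n\cdot(\partial_1 m_n\times\partial_2 m_n)\intd x\in\Z+o(1)$; letting $n\to\infty$ gives $c_j/(4\pi)\in\Z$, hence $d_j:=c_j/(4\pi)\in\N$ and $\theta_j=8\pi d_j$. Testing the weak-$*$ convergence of $\sigma_n$ against the constant $1$ gives $\sum_{j=1}^k d_j=\tfrac1{4\pi}\sigma(\overline\Omega)=d$, so $k\le d$, and $\lambda=\sum_{j=1}^k8\pi d_j\delta_{x_j}$, completing the proof.

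The main obstacle I foresee is the $\eps$-regularity and local compactness of the second step in the regime $Q_n\to\infty$, in which the anisotropy term of \eqref{eq:EL} carries the \emph{unbounded} coefficient $Q_n-1$. The key structural facts are the pointwise identity $|m_3e_3-m_3^2m|=|m_3|\,|m'|\le|m'|$, so that the in-plane component obeys a Yukawa-type equation $\Delta m_n'-(Q_n-1)m_{n,3}^2\,m_n'=-m_n'|\nabla m_n|^2+O(\kappa|\nabla m_n|)$, and the a priori bound $(Q_n-1)a_n\to0$ from the first step. The latter forces the concentration scales at each $x_j$ to be $o((Q_n-1)^{-1/2})$, at which scales the anisotropy mass is negligible and ordinary harmonic-map $\eps$-regularity applies, while away from the concentration set the same Yukawa mass damps $m_n'$ and yields local strong convergence to $-e_3$. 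A secondary technical point, which is why $\Omega$ is assumed simply connected with $C^{1,\alpha}$ boundary, is the possibility of concentration on $\partial\Omega$: here one invokes the continuity of $m_n$ up to $\overline\Omega$ from Proposition~\ref{p:reg} and the Dirichlet datum $m_n=-e_3$ on $\partial\Omega$ to flatten and reflect across $\partial\Omega$, reducing to the interior analysis and placing the $x_j$ in $\overline\Omega$.
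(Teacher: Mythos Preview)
Your overall architecture (sharp energy asymptotics $\Rightarrow$ concentration--compactness $\Rightarrow$ quantization) is sound and matches the paper's in spirit, but the implementation diverges at the crucial step, and the route you take is considerably harder than it needs to be.

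The paper does \emph{not} attempt any $\eps$-regularity for the full Euler--Lagrange equation with $Q_n\to\infty$. Instead it observes that once you have shown $\int_\Omega|\nabla m_n|^2\to 8\pi d$ (which you also prove, and which the paper gets directly from Lemma~\ref{lemma:a_priori}), the sequence $(m_n)$, extended by $-e_3$ and pulled to $\mathbb S^2$ via inverse stereographic projection, is a \emph{minimizing sequence for the pure Dirichlet energy} among degree-$d$ maps $\mathbb S^2\to\mathbb S^2$. At that point the paper applies Lin's theorem \cite{lin99} as a black box: the weak limit is a weakly harmonic map plus a quantized defect measure $\nu=\sum_j 8\pi d_j\delta_{z_j}$. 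The harmonic limit equals $-e_3$ on the open set $\mathbb S^2\setminus\Phi(\Omega)$, so by the Lemaire--Wood classification of harmonic maps between spheres it is constant; hence all the energy sits in $\nu$, $\sum d_j=d$, and pulling back to the plane gives \eqref{eq:8piddxj}. The DMI and anisotropy contributions to $\mu_n$ go to zero by \eqref{eq:aniQm12}. Continuity of $m_n$ up to $\partial\Omega$ from Proposition~\ref{p:reg} is used only so that $\hat m_n\in C(\mathbb S^2;\mathbb S^2)$, placing it in the setting of Lin's theorem.

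By contrast, you try to run the bubbling analysis from scratch on the original equation, and you correctly flag the main obstacle: $\eps$-regularity for \eqref{eq:EL} uniform in $Q_n$ is not standard because the anisotropy term carries the unbounded coefficient $Q_n-1$. Your heuristic (Yukawa structure for $m_n'$, scale separation forced by $(Q_n-1)a_n\to0$) is plausible but is not a proof; as written this step is a genuine gap. The point is that you never need it: the bound $D_n\to 8\pi d$ \emph{already} makes the sequence minimizing for the Dirichlet energy, and Lin's theorem (or, equivalently, the asymptotic holomorphy you derive plus standard concentration--compactness for almost-holomorphic sequences) supplies both the atomicity and the $8\pi$-quantization without ever touching the $Q_n$-dependent lower-order terms. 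Your quantization-by-gluing argument is essentially reproving the relevant part of Lin's result and is fine once you have strong $W^{1,2}$ convergence off the atoms --- but that is precisely what your unproven $\eps$-regularity was supposed to deliver.

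In short: replace your second paragraph's $\eps$-regularity step by the observation that $(m_n)$ is Dirichlet-minimizing, transfer to $\mathbb S^2$, and invoke \cite{lin99}; then the identification of the harmonic part as $-e_3$ (which you obtain by a different but correct argument) and the measure statement follow immediately.
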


In other words, this theorem says that for $d \in \mathbb N$,
$\kappa > 0$ and the domain $\Omega$ all fixed, as $Q \to \infty$ the
energy density of minimizers over $\mathcal A_d$ concentrates on a sum
of $k \in \mathbb N$ quantized delta-measures supported at points
$x_j$ in the closure of $\Omega$. Their amplitudes $8 \pi d_j$
correspond to the energies of the harmonic maps with degrees
$1 \leq d_j \leq d$ in the whole of $\R^2$, indicating a bubbling
phenomenon. In that sense one can interpret the minimizers for
$Q \gg 1$ as collections of $k$ well-separated skyrmionic
configurations. However, unless $d = 1$ we cannot conclude that
$k = d$ or, equivalently, that all $d_j = 1$ for all
$1 \leq j \leq k$, as is suggested by the results of numerical
simulations in this regime. In other words, based on the above result
we cannot conclude that minimizers with degree $d > 1$ and $Q \gg 1$
resemble a collection of $d$ well-separated skyrmions (i.e.,
minimizers with degree $d = 1$ in $\R^2$). The latter would require a
much finer analysis of the interaction of skyrmions that goes well
beyond the scope of the present paper.

\subsection{Strategy of the proof}

We apply the direct method of calculus of variations to establish
existence of minimizers of $\E$ over $\mathcal A_d$ for a given value
of $d \in \mathbb N$ in Theorem \ref{thm:existence}. As we work on a
bounded domain with Dirichlet boundary conditions,
lower-semicontinuity and coercivity of the energy functional easily
follow by the Sobolev embedding and the Cauchy-Schwarz inequality,
respectively. Therefore, the only issue in the proof of existence of
minimizers is to ensure that the degree $d$ is preserved when passing
to the weak limit of minimizing sequences. This can be achieved by
establishing a sort of strict subadditivity of the energy with respect
to the degree, which is the subject of the lemmas in Section
\ref{sec:auxiliary-results}.

More precisely, the main point in our existence argument is proving
that for each incremental increase in degree the infimal energy is
increased by strictly less than $8\pi$, which is the energy
contribution of a shrinking BP profile (a harmonic bubble of
  degree 1). This is the content of Lemma \ref{lemma:construction}.
In analogy to the existence of a single skyrmion \cite{melcher14}, we
can then rule out loss of degree in weak limits of minimizing
sequences. For $\kappa$ sufficiently small depending on $d$, an
increase in degree is prevented by the coercivity properties of the
energy presented in Lemma \ref{lemma:a_priori}. We note that in the
considered regime the energy is qualitatively dominated by the
exchange energy term.

We prove the required upper bound on the infimal energies by
constructing a competitor, in which we insert a tiny, truncated BP
profile at a point around which the lower degree minimizer is very
close to $-e_3$ in the $H^1$-topology.  As we already mentioned, this
is a surprisingly delicate issue as the exchange energy incurred by
pasting in the BP profile needs to be dominated by the gain in the DMI
energy, which is of order $\kappa^2$ (at least for $Q=1$, this is
sharp).  Therefore, we need to insert the skyrmion in a location where
the exchange energy is small at order $\kappa^2$. Surprisingly, this
only seems to be possible with further assumptions on $\Omega$ and the
parameters appearing in the energy, including the value of the degree.
Lemma \ref{lemma:covering} finally guarantees the existence of such a
location in sufficiently large domains by a standard covering argument
using the Hardy-Littlewood maximal operator.

Lastly, to prove our concentration result in Theorem
\ref{thm:asymptotics} we use a characterization of weak limits of
minimizing sequences of the Dirichlet energy due to Lin \cite{lin99}:
Such limits are harmonic maps which are smooth away from finitely many
points at which a suitable defect measure indicates concentration of
Dirichlet energy. Due to our boundary data, the harmonic map component
is constant, leaving concentration as the only non-trivial effect.

\section{Auxiliary results}
\label{sec:auxiliary-results}

We start by formulating several key technical results used in the
proof of Theorem \ref{thm:existence}. We recall that the quantity
$\alpha(Q, \kappa)$ appearing in all the lemmas below was defined in
\eqref{def:alpha}.

At the core of the proof of existence of minimizers of $\E$ are simple
lower bounds for the energy that control the $L^2$-norm of $\nabla \m$
for sufficiently small $\kappa$, together with a construction showing
that the infimum energy is strictly below the topological lower bound

\begin{align}
  \label{eq:tlb}
  \int_\Omega |\nabla m|^2 \intd x \geq 8 \pi d \qquad \forall m \in
  \mathcal A_d,
\end{align}
for the case of the pure Dirichlet energy and $d \in \mathbb N$ (see,
e.g., \cite[(3.3)]{melcher14} or \cite[Lemma A.3]{bms:arma21}).

\begin{lemma}\label{lemma:a_priori}
  Let $\kappa > 0$ and $Q\ge 1$.
   For $\m \in H^1(\Omega; \Sphere^2)$ satisfying
  $m = -e_3$ on $\partial \Omega$ we have 
 \begin{enumerate}[i)]
 \item the following lower bounds on the energy:
	\begin{align}
	    \label{eq:Eapriori}
          \E(m) &\geq \left(1 - \alpha(Q,\kappa)\right) \int_{\Omega}
                  |\nabla m|^2 \intd x,\\ 
	 \label{eq:Eapriori2}
          \E(m) &\geq \left(1 -2 \alpha(Q,\kappa)\right) \int_{\Omega}
                  |\nabla m|^2 \intd x + \frac{1}{2} \left(Q-1\right)
                  \int_{\Omega} \left(1-m_3^2\right) \intd x, 
	\end{align}
      \item the following upper bound on the energy:
 \begin{align}
          \label{eq:Eless8pi}
		\inf_{\mathcal{A}_d} \E < 8\pi d.
	\end{align}
 \end{enumerate}
\end{lemma}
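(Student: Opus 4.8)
The statement has two essentially independent parts, and I would treat them separately.

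\emph{Lower bounds \eqref{eq:Eapriori}--\eqref{eq:Eapriori2}.} The key point is an appropriate use of the Poincar\'e inequality. Writing $m=(m',m_3)$ with $\abs m=1$ a.e., so that $\abs{m'}^2=1-m_3^2$ and $\int_\Omega\abs{\nabla m}^2=\int_\Omega\abs{\nabla m'}^2+\int_\Omega\abs{\nabla m_3}^2$, I would observe that it is the \emph{perpendicular} component $m'=(m_1,m_2)$ --- and not $m_3$ --- that vanishes on $\partial\Omega$; hence $m_1,m_2\in W^{1,2}_0(\Omega)$ and $\int_\Omega\abs{\nabla m'}^2\ge\lambda_0\int_\Omega\abs{m'}^2$. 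Abbreviating $M:=\int_\Omega\abs{m'}^2$, $G:=\int_\Omega\abs{\nabla m_3}^2$ and estimating the DMI term by Cauchy--Schwarz, $\bigabs{2\kappa\int_\Omega m'\cdot\nabla m_3\intd x}\le 2\kappa M^{1/2}G^{1/2}$, one gets
\begin{align*}
  \E(m)-(1-\alpha)\int_\Omega\abs{\nabla m}^2\intd x \;\ge\; (\alpha\lambda_0+Q-1)M+\alpha G-2\kappa M^{1/2}G^{1/2}.
\end{align*}
Rationalizing \eqref{def:alpha} shows that $\alpha=\alpha(Q,\kappa)$ is precisely the positive root of $\lambda_0 t^2+(Q-1)t-\kappa^2=0$, i.e.\ $\kappa^2=(\alpha\lambda_0+Q-1)\alpha$, so the right-hand side is nonnegative by the arithmetic--geometric mean inequality; this proves \eqref{eq:Eapriori}. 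For \eqref{eq:Eapriori2} one simply keeps $\tfrac12(Q-1)M$ on the left-hand side and checks, again using $\kappa^2=\alpha^2\lambda_0+(Q-1)\alpha$, that the resulting threshold $(2\alpha\lambda_0+\tfrac12(Q-1))\cdot 2\alpha\ge\kappa^2$ reduces to the trivial inequality $4\alpha^2\lambda_0\ge\alpha^2\lambda_0$.

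\emph{Upper bound \eqref{eq:Eless8pi}.} Here I would exhibit a competitor in $\mathcal A_d$ by pasting $d$ tiny truncated Belavin--Polyakov bubbles into the constant field $-e_3$. Since $\kappa>0$ may be assumed, fix a degree-one profile $\Phi\colon\R^2\to\Sphere^2$ with $\Phi\to -e_3$ at infinity whose chirality is chosen so that $I:=\int_{\R^2}\Phi'\cdot\nabla\Phi_3\intd y>0$ (one of the two equivariant profiles does this; recall the sharp decay $\abs{\Phi+e_3}=O(\abs y^{-1})$, $\abs{\nabla\Phi}=O(\abs y^{-2})$, which makes $I$ finite). Choosing $d$ disjoint closed balls $\overline{B_r(p_j)}\subset\Omega$, a cut-off $\eta$ equal to $1$ on $B_{r/2}$ and $0$ outside $B_r$, and letting $m_\eps$ be the (normalized) interpolation between $-e_3$ and $\Phi((\cdot-p_j)/\eps)$ on each $B_r(p_j)$ and $-e_3$ elsewhere, the decay rates above yield, as $\eps\to 0$,
\begin{align*}
  \int_\Omega\abs{\nabla m_\eps}^2\intd x=8\pi d+O(\eps^2),\qquad -2\kappa\int_\Omega m_\eps'\cdot\nabla m_{\eps,3}\intd x=-2\kappa d I\eps+O(\eps^2),\qquad (Q-1)\int_\Omega\abs{m_\eps'}^2\intd x=O(\eps^2\abs{\log\eps}),
\end{align*}
with constants depending on $d,Q,r,\Phi$ but not on $\eps$ (the anisotropy term is only logarithmically sized because $\abs{\Phi'}^2\sim\abs y^{-2}$). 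Thus $\E(m_\eps)=8\pi d-2\kappa d I\eps+o(\eps)<8\pi d$ for $\eps$ small, while $\mathcal N(m_\eps)=d$: the boundary condition and $\abs{m_\eps}=1$ are built in, and $\mathcal N(m_\eps)$ differs from $d$ by $o(1)$ (the truncation changes the Jacobian only on the thin annuli), hence equals $d$, being an integer.

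\emph{Main obstacle.} Part i) is a short computation once the correct Poincar\'e inequality (applied to $m'$) and the algebraic role of $\alpha$ are identified. The delicate point is part ii): one must ensure that the DMI \emph{gain}, which is only of order $\eps$ (and of order $\kappa^2$ after optimizing $\eps$, as needed for the sharper Lemma~\ref{lemma:construction}), strictly beats every error incurred by truncating and gluing --- in particular the $O(\eps^2)$ fluctuation of the Dirichlet energy around $8\pi d$, whose sign is not controlled, and the logarithmically large anisotropy term. It is also essential that a single equivariant bubble carries nonzero DMI energy, which forces the use of $d$ well-separated degree-one bubbles rather than a single degree-$d$ profile, whose DMI energy would vanish by symmetry.
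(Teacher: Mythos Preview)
Your proof is correct and follows essentially the same approach as the paper: for part~i) you use the identical Poincar\'e--Cauchy--Schwarz--AM--GM argument (applying Poincar\'e to $m'\in W^{1,2}_0$ and identifying $\alpha(Q,\kappa)$ as the positive root of $\lambda_0 t^2+(Q-1)t-\kappa^2=0$), and for part~ii) the paper likewise inserts $d$ small truncated Belavin--Polyakov bubbles, referring to \cite[Lemma~3.2]{mmss:cmp23} for the details you have spelled out. Your scaling analysis of the three energy contributions ($8\pi d+O(\eps^2)$, $-2\kappa dI\eps$, $O(\eps^2\abs{\log\eps})$) and the degree argument are all accurate.
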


As can be seen from estimates \eqref{eq:Eapriori} and
\eqref{eq:Eapriori2}, the energy $\E$ may be used to control
simultaneously the exchange and the anisotropy energy when
  \begin{align}
    \label{eq:alpha_bound}
    \alpha(Q, \kappa) \leq \frac12.
  \end{align}
We also need some basic properties of the function
$\alpha(Q,\kappa)$.

\begin{lemma}\label{lemma:alpha}
  Let $\kappa >0$ and $Q\geq 1$ satisfy inequality
  \eqref{eq:alpha_bound}.  If $\lambda_0 \geq Q-1$, we have
  \begin{align}\label{eq:lambda}
    {\frac{2\kappa^2}{3\lambda_0} }\leq \alpha(Q,\kappa),
  \end{align}
  while for $\lambda_0 < Q-1$, we have
  \begin{align}
    \frac{2 \kappa^2}{3 (Q-1)} \leq \alpha(Q,\kappa).
  \end{align}
\end{lemma}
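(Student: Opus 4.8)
The plan is to obtain both inequalities from the elementary observation that the denominator in the definition \eqref{def:alpha} of $\alpha(Q,\kappa)$ can be bounded above in each of the two regimes. Write
\begin{align}
  \alpha(Q,\kappa) = \frac{2\kappa^2}{\sqrt{(Q-1)^2 + 4\lambda_0\kappa^2} + (Q-1)},
\end{align}
so that it suffices to show that the denominator $D := \sqrt{(Q-1)^2 + 4\lambda_0\kappa^2} + (Q-1)$ satisfies $D \leq 3\lambda_0$ when $\lambda_0 \geq Q-1$, and $D \leq 3(Q-1)$ when $\lambda_0 < Q-1$. First I would record the standing consequence of the hypothesis \eqref{eq:alpha_bound}: since $\alpha(Q,\kappa) \leq \tfrac12$, multiplying out gives $4\kappa^2 \leq D$, and since $D \leq 2\sqrt{(Q-1)^2 + 4\lambda_0\kappa^2}$ is false in general I instead use $D \geq 2\sqrt{\lambda_0}\,\kappa$ together with $D \geq \sqrt{4\lambda_0\kappa^2}$, hence $4\kappa^2 \leq D$ yields $\kappa^2 \leq \lambda_0$ after squaring the bound $2\sqrt{\lambda_0}\,\kappa \leq D$ is not quite what is needed — more directly, from $4\kappa^2 \le D$ and $D^2 - 2(Q-1)D = 4\lambda_0 \kappa^2$ one gets $D = (Q-1) + \sqrt{(Q-1)^2+4\lambda_0\kappa^2}$, and combining $4\kappa^2 \le D$ with the explicit form gives the key inequality $\kappa^2 \leq \lambda_0$; I will verify this algebraic step carefully as it is the only nonroutine point.

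Granting $\kappa^2 \leq \lambda_0$, the two cases are immediate. In the case $\lambda_0 \geq Q-1$ we estimate, under the monotonicity of the square root,
\begin{align}
  D = \sqrt{(Q-1)^2 + 4\lambda_0\kappa^2} + (Q-1)
  \leq \sqrt{\lambda_0^2 + 4\lambda_0^2} + \lambda_0
  = (1+\sqrt{5})\lambda_0 \leq 3\lambda_0,
\end{align}
where I used $Q-1 \leq \lambda_0$ and $\kappa^2 \leq \lambda_0$ in the radicand, and $1+\sqrt5 < 3$ in the last step, whence $\alpha(Q,\kappa) = 2\kappa^2/D \geq 2\kappa^2/(3\lambda_0)$, which is \eqref{eq:lambda}. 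In the case $\lambda_0 < Q-1$ we use $\kappa^2 \leq \lambda_0 < Q-1$ to get $4\lambda_0\kappa^2 < 4(Q-1)^2$, hence
\begin{align}
  D = \sqrt{(Q-1)^2 + 4\lambda_0\kappa^2} + (Q-1)
  < \sqrt{(Q-1)^2 + 4(Q-1)^2} + (Q-1)
  = (1+\sqrt5)(Q-1) \leq 3(Q-1),
\end{align}
which gives $\alpha(Q,\kappa) = 2\kappa^2/D \geq 2\kappa^2/(3(Q-1))$, as claimed.

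The only genuine obstacle is the derivation of the auxiliary bound $\kappa^2 \leq \lambda_0$ from the hypothesis $\alpha(Q,\kappa) \leq \tfrac12$; everything else is a one-line radicand comparison. Concretely, $\alpha(Q,\kappa) \leq \tfrac12$ reads $4\kappa^2 \leq \sqrt{(Q-1)^2+4\lambda_0\kappa^2} + (Q-1)$; isolating the root and squaring (legitimate since the right-hand side is nonnegative once $4\kappa^2 - (Q-1) \le \sqrt{\cdots}$ is checked, which holds trivially when $4\kappa^2 \le Q-1$ and otherwise after squaring $16\kappa^4 - 8\kappa^2(Q-1) \le 4\lambda_0\kappa^2$, i.e. $4\kappa^2 - 2(Q-1) \le \lambda_0$, and in particular $\kappa^2 \le \lambda_0$ whenever also $\kappa^2 \ge \tfrac12(Q-1)$; the complementary range $\kappa^2 < \tfrac12(Q-1) \le \tfrac12 \lambda_0$ already gives $\kappa^2 \le \lambda_0$ directly). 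I would present this case split cleanly rather than through a single chain of inequalities to avoid sign issues when squaring.
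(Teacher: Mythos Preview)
Your overall strategy of bounding the denominator $D = \sqrt{(Q-1)^2 + 4\lambda_0\kappa^2} + (Q-1)$ is exactly right and matches the paper's approach, but the execution fails at two points.

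First, the numerical inequality you invoke at the end of each case, $1+\sqrt{5} \leq 3$, is false: $\sqrt{5} \approx 2.236$, so $1+\sqrt{5} \approx 3.236$. Thus from $\kappa^2 \leq \lambda_0$ alone you only get $D \leq (1+\sqrt{5})\lambda_0$, which is \emph{not} $\leq 3\lambda_0$. You need the sharper bound $\kappa^2 \leq \tfrac{3}{4}\lambda_0$; then $4\lambda_0\kappa^2 \leq 3\lambda_0^2$ and $D \leq \sqrt{\lambda_0^2 + 3\lambda_0^2} + \lambda_0 = 3\lambda_0$ exactly. The same comment applies verbatim in the second case with $Q-1$ in place of $\lambda_0$.

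Second, the auxiliary claim $\kappa^2 \leq \lambda_0$ is simply false in the regime $\lambda_0 < Q-1$. Take $\lambda_0 = 1$, $Q-1 = 100$, $\kappa^2 = 10$: then $D \approx 200$ and $\alpha \approx 0.1 \leq \tfrac12$, yet $\kappa^2 = 10 > 1 = \lambda_0$. Your case split for the derivation of $\kappa^2\le\lambda_0$ tacitly uses $Q-1 \leq \lambda_0$ in the ``complementary range'' subcase, so it only addresses the first regime. What actually holds, and what the paper derives, is $\kappa^2 \leq \tfrac{3}{4}\max\{\lambda_0, Q-1\}$, obtained by bounding $\alpha$ from below by $\tfrac12\big(\sqrt{1+4\kappa^2/\max\{\lambda_0,Q-1\}}-1\big)$ and using $\alpha \leq \tfrac12$. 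In fact, if you revisit your own subcase $\kappa^2 \geq \tfrac12(Q-1)$ in the first regime and combine $4\kappa^2 - 2(Q-1) \leq \lambda_0$ with $Q-1 \leq \lambda_0$, you already obtain $\kappa^2 \leq \tfrac14\lambda_0 + \tfrac12(Q-1) \leq \tfrac{3}{4}\lambda_0$, which is precisely the sharp bound you need.
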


\noindent Note that the first estimate \eqref{eq:lambda} is
non-optimal if $Q=1$, as then one would have
$\alpha(Q,\kappa) = \lambda_0^{-\frac{1}{2}} \kappa$, but for our
purposes this estimate is sufficient.
 
We now come to the result that is at the heart of our existence
proof. We begin with a basic observation that the minimal energy
cannot go up by more that $8 \pi$, the Dirichlet energy of the degree
1 harmonic map from $\R^2$ to $\mathbb S^2$, when the value of the
degree in the admissible class is increased by 1. This fact is
completely independent of the parameters of the model and simply
reflects the leading order role of the Dirichlet energy.

\begin{proposition}
  \label{p:8pi}
  Let $\kappa \in \R$, $Q \in \R$, and $d \in \mathbb N$. Then
  $\inf_{\mathcal A_{d+1}} \E \leq \inf_{\mathcal A_d} \E + 8 \pi$.
\end{proposition}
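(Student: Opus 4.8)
The plan is to prove Proposition~\ref{p:8pi} by a direct gluing construction: given any admissible $m \in \mathcal A_d$ with $\E(m)$ close to $\inf_{\mathcal A_d}\E$, I will modify $m$ on a tiny disk to produce a competitor $\tilde m \in \mathcal A_{d+1}$ whose energy exceeds $\E(m)$ by at most $8\pi + o(1)$. The key point is that on a small enough disk, $m$ is $H^1$-close to some constant value, and a shrinking degree-1 Belavin--Polyakov (BP) bubble can be inserted there with a controlled transition annulus. To make the DMI and anisotropy terms negligible, I will exploit scaling: as the bubble radius $\rho \to 0$, the Dirichlet energy of the BP profile stays at $8\pi$ (conformal invariance), the DMI term scales like $\rho$ (one power of length from the integral measure against $|\nabla m|$), and the anisotropy term scales like $\rho^2$, so both vanish in the limit. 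Thus only the exchange energy survives, and it contributes exactly $8\pi$ plus the error from the cutoff annulus, which can also be made arbitrarily small.

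Concretely, first I would fix $\eps > 0$ and choose $m \in \mathcal A_d$ with $\E(m) \leq \inf_{\mathcal A_d}\E + \eps$. Since $m \in H^1(\Omega;\Sphere^2)$, the map $x \mapsto \int_{B_r(x_0)} |\nabla m|^2 \intd x$ is absolutely continuous in $r$, so for a.e.\ $x_0 \in \Omega$ and a sequence of radii $r \to 0$ the Dirichlet energy on $B_r(x_0)$ tends to zero; by a trace/Poincaré argument on the annulus $B_{2r}(x_0)\setminus B_r(x_0)$ I can also arrange that the mean oscillation of $m$ there is small, so $m$ is $H^1$-close to a constant vector $p \in \Sphere^2$ on this annulus. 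Next I would recall the standard BP profile: a degree-1 harmonic map $\Phi: \R^2 \to \Sphere^2$ with $\int_{\R^2}|\nabla\Phi|^2 = 8\pi$, normalized so that $\Phi(y) \to p$ as $|y| \to \infty$ with the decay $|\Phi(y) - p| \lesssim |y|^{-1}$ and $|\nabla\Phi(y)| \lesssim |y|^{-2}$. Then, after a small rotation of target to align the asymptotic value with $p$, I rescale $\Phi_\rho(x) := \Phi((x-x_0)/\rho)$, which still has Dirichlet energy $8\pi$ on $\R^2$ and on any large ball captures $8\pi - o(1)$ of it. The competitor $\tilde m$ is defined to equal $\Phi_\rho$ on $B_{r}(x_0)$ (with $\rho \ll r$), to equal $m$ outside $B_{2r}(x_0)$, and on the annulus to be the normalized convex combination (projected to $\Sphere^2$) interpolating between $\Phi_\rho|_{\partial B_r}$ and $m|_{\partial B_{2r}}$ — both of which are close to $p$. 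By construction $\tilde m \in \mathcal A_{d+1}$ since adding a degree-1 bubble raises the degree by one.

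The energy bookkeeping then splits into three regions. Outside $B_{2r}(x_0)$, $\E$ restricted there is unchanged, contributing at most $\E(m)$. On $B_r(x_0)$, the Dirichlet energy is $\int_{B_{r/\rho}(0)}|\nabla\Phi|^2 \leq 8\pi$; the DMI term is bounded by $2\kappa \int_{B_r(x_0)} |\Phi_\rho'|\,|\nabla(\Phi_\rho)_3| \leq C\kappa \int |\nabla \Phi_\rho|^2 \cdot (\text{a length factor})$ — more carefully, $\int_{B_r}|\nabla\Phi_\rho| \leq Cr$ after rescaling, so $|\kappa \int m' \cdot \nabla m_3| = O(\kappa r)$; the anisotropy term is $(Q-1)\int_{B_r}|\Phi_\rho'|^2 \leq C(Q-1)\rho^2\log(r/\rho) = o(1)$ as $\rho \to 0$. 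On the annulus, since both boundary values are within distance $\delta$ of the constant $p$ and the oscillation of $m$ there is small, the interpolation has Dirichlet energy bounded by $C(\delta^2 + \int_{\text{annulus}}|\nabla m|^2) = o(1)$, and the lower-order terms are likewise $o(1)$. Summing, $\E(\tilde m) \leq \E(m) + 8\pi + o(1) \leq \inf_{\mathcal A_d}\E + \eps + 8\pi + o(1)$; sending first $\rho \to 0$, then $r \to 0$, then $\eps \to 0$ gives $\inf_{\mathcal A_{d+1}}\E \leq \inf_{\mathcal A_d}\E + 8\pi$.

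The main obstacle is the gluing on the annulus: one must verify that $m$ genuinely becomes close to a single constant $p$ (not merely small-gradient) on a well-chosen annulus, which requires combining the vanishing of $\int_{B_{2r}}|\nabla m|^2$ with a Poincaré inequality on annuli and a good choice of radii via a mean-value / Fubini argument in $r$; and that the $\Sphere^2$-valued interpolation between the two nearby boundary traces can be carried out without energy blowup — this is a standard but slightly technical lemma (e.g.\ the ``dipole removal'' or ``bubble insertion'' construction familiar from the analysis of harmonic maps), which I would either cite or prove via an explicit geodesic interpolation on $\Sphere^2$. Everything else — the conformal invariance of the BP energy and the scaling of the lower-order terms — is routine.
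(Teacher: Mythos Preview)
Your proposal is correct and follows essentially the same route as the paper: choose a near-minimizer $m\in\mathcal A_d$, find a small disk where $m$ is $H^1$-close to a constant (the paper does this via a Lebesgue point of $\nabla m$ combined with the same Fubini-on-circles argument you allude to), insert a shrinking degree-$1$ BP bubble with a transition annulus, and verify that the exchange cost is $8\pi+o(1)$ while the DMI and anisotropy contributions vanish by scaling. The paper's only substantive difference is that it reuses the explicit cutoff construction from Lemma~\ref{lemma:construction} rather than your generic projected interpolation, and it fixes $\rho=\delta^2$ from the outset.
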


\noindent The conclusion of this proposition reflects a possible
bubbling phenomenon: assuming a minimizer over $\mathcal A_d$ exists,
a minimizing sequence from $\mathcal A_{d+1}$ could converge to that
minimizer everywhere except at one point, around which the profile
approaches a sequence of vanishing BP profiles that disappear in the
limit. In this situation the degree of the minimizing sequence would
not be preserved in the limit, failing to yield existence of a
minimizer over $\mathcal A_{d+1}$. Furthermore, in this case we would
have $\inf_{\mathcal A_{d+1}} \E = \inf_{\mathcal A_d} \E + 8 \pi$.

Lemma \ref{lemma:construction} below, which will allow us to rule out
loss of degree in weak limits of minimizing sequences, expresses the
following deeper result: Given enough control on the energy density,
it is possible to insert a carefully chosen, truncated BP profile in
such a way that the energy increases by \emph{strictly} less than
$8 \pi$. The proof requires a careful construction ensuring that the
energy gain in the DMI term of the inserted BP profile wins out over
the error terms incurred in the insertion procedure.

\begin{lemma}\label{lemma:construction}
  There exists a universal constant $\eps>0$ with the following
  property: Let $\kappa >0$ and $Q \geq 1$ satisfy inequality
  \eqref{eq:alpha_bound}, and let $d \in \N \cup\{0\}$. If
  $m \in \mathcal{A}_{d}$ satisfies
  \begin{align} \label{eq:cond} \frac{1}{\pi r^2} \int_{B_r(x)}
    \left(|\nabla m|^2 + {\max\{\lambda_0,Q-1\}} |m+e_3|^2 \right)
    \intd y \leq \eps \kappa^2
\end{align}
for some $x\in \Omega$ and all $r>0$ such that
$B_r(x) \subset \Omega$, then there exists
$\bar m \in \mathcal{A}_{d+1}$ with
\begin{align} \label{eq:compare} \E(\bar m) < \E(m) + 8 \pi.
\end{align}        
\end{lemma}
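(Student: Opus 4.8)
The plan is to construct $\bar m$ explicitly by gluing a truncated, rescaled Belavin--Polyakov profile of degree $1$ into $m$ near the point $x$ where the energy density is small in the sense of \eqref{eq:cond}, and to estimate the resulting energy change carefully so that the gain in the DMI term beats all error terms. Recall that the standard degree $1$ BP profile $\phi: \R^2 \to \Sphere^2$ given in complex notation by $\phi(z) = \left( \frac{2z}{1+|z|^2}, \frac{|z|^2-1}{1+|z|^2}\right)$ (or a rotated version adapted to $\kappa>0$) is harmonic with $\int_{\R^2} |\nabla \phi|^2 = 8\pi$, equals $-e_3$ at the origin and tends to $+e_3$ at infinity. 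For small $\rho>0$ I would consider $\phi_\rho(y) := \phi((y-x)/\rho)$, which concentrates its Dirichlet energy near $x$, and then \emph{truncate} it on an annulus $B_{2R}(x)\setminus B_R(x)$ (with $\rho \ll R$ and $B_{2R}(x)\subset\Omega$), interpolating between $\phi_\rho$ on $B_R(x)$ and $-e_3$ on $\partial B_{2R}(x)$ so as to define $\bar m$ to agree with $m$ outside $B_{2R}(x)$. Actually, since we need $\bar m$ to have degree exactly $d+1$ and to lie in $\mathcal A_{d+1}$, the cleanest route is: define $\bar m = m$ outside $B_{2R}(x)$; on $B_{2R}(x)\setminus B_R(x)$ interpolate between $m$ (which is close to $-e_3$ there, by \eqref{eq:cond}) and $-e_3$; and on $B_R(x)$ put the truncated bubble $\phi_\rho$ suitably modified near $\partial B_R(x)$ to match $-e_3$. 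The degree is then additive: $\mathcal N(\bar m) = \mathcal N(m) + 1 = d+1$.

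The key estimates come next. First, the \textbf{Dirichlet energy}: the truncated bubble contributes $8\pi + O(\rho^2/R^2)$ to $\int |\nabla \bar m|^2$ from the core (the tail of the BP profile outside $B_R$ is of this order, which is the standard truncation estimate), plus the interpolation-layer cost on $B_{2R}(x)\setminus B_R(x)$, which by \eqref{eq:cond} and a Poincaré/trace estimate on the annulus is bounded by $C \int_{B_{2R}(x)}(|\nabla m|^2 + R^{-2}|m+e_3|^2)\intd y \le C \eps\kappa^2 R^2 \cdot R^{-0} $ — more precisely $O(\eps \kappa^2 R^2)$ after using that the average over $B_{2R}(x)$ is $\le \eps\kappa^2$. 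Removing the old piece of $m$ on $B_{2R}(x)$ likewise only changes the energy by $O(\eps\kappa^2 R^2)$. Second, the \textbf{anisotropy term} $(Q-1)\int(1-m_3^2)$ changes by at most $C\eps\kappa^2 R^2 + C(Q-1)R^2$ (the bubble has $1-(\phi_\rho)_3^2$ concentrated at scale $\rho$, contributing $O(\rho^2\log(R/\rho))$ or just $O(R^2)$ crudely), which we will control by choosing $R$ small. Third — and this is the crucial gain — the \textbf{DMI term} $-2\kappa \int m'\cdot\nabla m_3$ evaluated on the inserted bubble is \emph{negative} of order $-c\kappa\rho$ for the \emph{right} chirality of $\phi$ (the DMI energy of a BP profile of radius $\rho$ scales linearly in $\rho$, with a sign we can fix by rotating $\phi$ in the plane, since we assumed $\kappa>0$ WLOG). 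Meanwhile the DMI cross-terms between the bubble and the background, and between the interpolation layer and the background, are bounded by $C\kappa (\rho + \eps^{1/2}\kappa R^2 + R \cdot \eps^{1/2}\kappa R)$ — lower order relative to $c\kappa\rho$ once $\eps$ is small and $R$ is chosen appropriately in terms of $\rho$.

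Putting the estimates together gives
\begin{align*}
  \E(\bar m) - \E(m) \le 8\pi - c\kappa\rho + C\frac{\rho^2}{R^2} + C\eps\kappa^2 R^2 + C(Q-1)R^2 + C\kappa\rho^2 + C\kappa\eps^{1/2}\kappa R^2 + \cdots
\end{align*}
The strategy for closing this is a \textbf{two-parameter optimization}: first choose $R$ depending on $\rho$, say $R = \rho^{1/2}$ (or balance $\rho^2/R^2$ against the terms with $R^2$), using that $\max\{\lambda_0, Q-1\}$ appears in \eqref{eq:cond} precisely so that both the $\eps\kappa^2 R^2$ anisotropy-type term and the $(Q-1)R^2$ term are absorbed into $\eps\kappa^2 R^2 \le \eps\kappa^2$-type bounds; then send $\rho \to 0$ so that the linear gain $-c\kappa\rho$ dominates the quadratic-in-$\rho$ errors, while $\eps$ (chosen universally small at the outset) ensures the DMI cross-terms of order $\eps^{1/2}\kappa^2 R^2$ stay below $\tfrac12 c\kappa\rho$. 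The main obstacle — exactly as the paper's introduction warns — is the \textbf{rigidity of the harmonic map / Dirichlet energy}: one cannot make the Dirichlet energy of the inserted degree-$1$ object go \emph{below} $8\pi$, so the entire strict inequality \eqref{eq:compare} must be purchased from the DMI term, which is only of order $\kappa\rho$; hence every error term — especially the Dirichlet cost of the interpolation annulus and the DMI interaction between bubble and background — must be shown to be genuinely of lower order than $\kappa\rho$, which forces the careful choice of where to insert (hypothesis \eqref{eq:cond}), the careful choice of truncation radius $R$, and the smallness of the universal constant $\eps$. The chirality/sign bookkeeping for the DMI term (ensuring the gain is negative, not positive) is the other point requiring care, handled by rotating the BP profile in the plane, which is legitimate since we reduced to $\kappa>0$.
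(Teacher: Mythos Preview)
Your overall strategy---gluing a truncated degree-one Belavin--Polyakov bubble into $m$ near $x$ and showing that the DMI gain $-c\kappa\rho$ beats all error terms---is exactly the paper's approach, and the difficulty you identify (that the strict inequality must come entirely from the DMI term, since the Dirichlet energy of the inserted bubble cannot drop below $8\pi$) is the right one.

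However, your interpolation-layer estimate has a genuine gap. A radial interpolation from $m|_{\partial B_{2R}}$ to the constant $-e_3$ costs, after a good-circle (Fubini) argument, of order
\[
R^{-1}\int_{\partial B_{2R}}|m+e_3|^2\intd\mathcal H^1 + R\int_{\partial B_{2R}}|\nabla_\tau m|^2\intd\mathcal H^1.
\]
From \eqref{eq:cond} the second term is indeed $O(\eps\kappa^2 R^2)$, but the first is only $O\big(\eps\kappa^2/\max\{\lambda_0,Q-1\}\big)$, which does \emph{not} vanish as $R\to 0$. After the optimal choice $\rho\sim\kappa R^2$ (your parenthetical ``balance'', not the literal $R=\rho^{1/2}$) the DMI gain is $\sim\kappa^2 R^2$, and you must send $R\to 0$ to kill the anisotropy error $(Q-1)\rho^2\log(R/\rho)$ when $Q-1\gg\kappa^2$; the non-vanishing interpolation error then defeats the gain, and no \emph{universal} $\eps$ closes the argument.

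The paper's remedy is to interpolate not to $-e_3$ but to the \emph{circle average} $\big((m')_{r_0},\,-\sqrt{1-|(m')_{r_0}|^2}\big)$. The Poincar\'e inequality on $\partial B_{r_0}$ then replaces $|m+e_3|^2$ by $|m'-(m')_{r_0}|^2\le Cr_0^2|\nabla_\tau m'|^2$, yielding the genuine $O(\eps\kappa^2\delta^2)$ bound for the transition layer (their $\delta$ plays the role of your $R$). The price is that the inserted BP profile must be rotated in $SO(3)$ to match this nearly-constant boundary value; hypothesis \eqref{eq:cond} together with \eqref{eq:alpha_bound} guarantees the rotation is within $O(\eps^{1/2})$ of the identity, which is just enough to preserve the sign and order of the DMI gain.
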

      
We observe that the above lemma works under an assumption of smallness
of the energy density in a subdomain of $\Omega$. In the following
lemma we show that this assumption is true in sufficiently large or
sufficiently slender domains.

\begin{lemma}\label{lemma:covering}
  There exists a universal constant $\C>0$ such that for all $\eps>0$,
  $\Omega \subset \R^2$ open, bounded, and Lipschitz, $d \in \N$, and
  $\kappa >0$ and $Q \geq1$ satisfying inequality
  \eqref{eq:alpha_bound} the following holds: Let
 \begin{align}\label{eq:beta}
	\beta(\kappa,Q,d) :=  \begin{cases}
		1 & \text{ if } \lambda_0 \geq Q-1,\\
		\frac{ d\kappa^2}{Q-1} &  \text{ if } \lambda_0 < Q-1.
	\end{cases}
\end{align}
If
$|\Omega| \geq \C\left( \beta(\kappa,Q,d) + 1 \right)
\frac{d}{\eps\kappa^2}$, $m \in \mathcal{A}_d$ and $\E(m)\leq 8\pi d$,
then there exists $x\in \Omega$ such that for all $r>0$ with
$B_r(x) \subset \Omega$, we have
\begin{align}
  \frac{1}{\pi r^2} \int_{B_r(x)} \left(|\nabla m|^2 +
  \max\{\lambda_0,Q-1\} |m+e_3|^2 \right) \intd y \leq \eps \kappa^2. 
\end{align}
\end{lemma}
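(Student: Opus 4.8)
The plan is to fix $m \in \mathcal{A}_d$ with $\E(m)\le 8\pi d$ and argue by contradiction: suppose that for \emph{every} $x\in\Omega$ there exists some admissible radius $r=r(x)>0$ with $B_{r}(x)\subset\Omega$ violating the desired bound, i.e.
\begin{align}\label{eq:plan-bad}
  \int_{B_{r(x)}(x)} \left(|\nabla m|^2 + \max\{\lambda_0,Q-1\}\,|m+e_3|^2\right)\intd y > \eps\kappa^2\,\pi r(x)^2.
\end{align}
First I would bound the total mass of the measure
\[
  \nu := \left(|\nabla m|^2 + \max\{\lambda_0,Q-1\}\,|m+e_3|^2\right)\mathcal L^2 \restriction \Omega.
\]
The Dirichlet part contributes at most $\int_\Omega|\nabla m|^2\le \frac{1}{1-\alpha}\E(m)\le 2\cdot 8\pi d$ by \eqref{eq:Eapriori} and \eqref{eq:alpha_bound}. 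For the anisotropy part, in the regime $\lambda_0\ge Q-1$ I would use the Poincar\'e inequality applied to $f=m_3+1\in W^{1,2}_0(\Omega)$ together with $|m+e_3|^2 = |m'|^2 + (m_3+1)^2 \le 2(1-m_3^2)+ \ldots$; more cleanly, since $|m+e_3|^2\le 2(1+m_3)$ pointwise and $1+m_3 = (1+m_3)$, one controls $\int_\Omega|m+e_3|^2$ by $C\int_\Omega(1-m_3^2)$, hence $\lambda_0\int_\Omega|m+e_3|^2 \le C\lambda_0\int_\Omega(1-m_3^2)\le C\int_\Omega|\nabla m_3|^2 \le C\,\E(m)$ by Poincar\'e (using $\lambda_0\ge Q-1$ so that the anisotropy term in $\E$ is not needed, just the Dirichlet term). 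In the regime $\lambda_0<Q-1$ one instead uses \eqref{eq:Eapriori2} to get $(Q-1)\int_\Omega(1-m_3^2)\le 2\E(m)\le 16\pi d$, so $(Q-1)\int_\Omega|m+e_3|^2\le C d$ as well. In both cases $\nu(\Omega)\le C d$ for a universal $C$ — the factor $\beta$ will enter not here but through the interaction with $|\Omega|$ below.

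Next I would invoke the covering/maximal-function machinery. The assumption \eqref{eq:plan-bad} says precisely that the (uncentered, or centered — either works with the Besicovitch/Vitali covering lemma) Hardy--Littlewood maximal function of $\nu$, restricted to balls contained in $\Omega$, satisfies $M_\Omega\nu(x) > \eps\kappa^2$ for every $x\in\Omega$. Applying the Vitali covering lemma to the family $\{B_{r(x)}(x)\}_{x\in\Omega}$, I extract a countable disjoint subfamily $\{B_{r_i}(x_i)\}$ with $\Omega\subset \bigcup_i 5 B_{r_i}(x_i)$, hence
\begin{align}\label{eq:plan-cover}
  |\Omega| \le \sum_i |5B_{r_i}(x_i)| = 25\sum_i \pi r_i^2 < \frac{25}{\eps\kappa^2}\sum_i \nu(B_{r_i}(x_i)) \le \frac{25}{\eps\kappa^2}\,\nu(\Omega) \le \frac{25\,C d}{\eps\kappa^2},
\end{align}
using disjointness of the $B_{r_i}(x_i)$ in the last inequality but one. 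This contradicts the hypothesis $|\Omega|\ge \C(\beta+1)\frac{d}{\eps\kappa^2}$ as soon as $\C$ is chosen larger than $25\,C$ (with $\beta\ge 1$ always, the factor $\beta+1\ge 2$ only helps), completing the proof.

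The main obstacle I anticipate is bookkeeping the anisotropy mass bound $\nu(\Omega)\le Cd$ \emph{uniformly in the two regimes} and making sure the weight $\max\{\lambda_0,Q-1\}$ is exactly the one that is controlled — this is why the two-case structure of $\beta$ (and the two-case estimates in Lemma~\ref{lemma:alpha}) appears. In the case $\lambda_0<Q-1$ one genuinely needs \eqref{eq:Eapriori2} rather than \eqref{eq:Eapriori}, and the pointwise inequality relating $|m+e_3|^2$ to $1-m_3^2$ (valid since $|m|=1$: $|m+e_3|^2 = 2+2m_3 = 2(1+m_3)$ and $1-m_3^2=(1-m_3)(1+m_3)\ge (1+m_3)$ wherever $m_3\le 0$, while on $\{m_3>0\}$ one has $|m+e_3|^2\le 4$ and can absorb it) must be handled with a little care near $m_3\approx 1$; alternatively one simply notes $|m+e_3|^2\le 4(1-m_3)$ is false but $|m+e_3|^2 = 2(1+m_3)\le 4$ combined with $\int_\Omega(1+m_3)\intd x$ being controlled once one observes $1+m_3 \le \tfrac{1}{2}(1+m_3)^2 + \tfrac12$ is also not quite it — the clean route is $(1+m_3)\le (1-m_3^2) + (1+m_3)m_3$ and integrate, or just directly bound $\int_\Omega(1+m_3)\le \lambda_0^{-1}\int_\Omega|\nabla m_3|^2$ which is false since $1+m_3\notin W_0^{1,2}$... hence the correct and cleanest statement is: bound $\int_\Omega |m+e_3|^2$ by first noting $|m+e_3|^2\le 2|m'|^2 + 2(1+m_3)^2$ and that $(1+m_3)^2\le 2(1-m_3^2)$ on $\{m_3\ge -\tfrac12\}$ while $|m+e_3|^2\le 4\le 8(1-m_3^2)$ on $\{m_3\le -\tfrac12\}$, giving $|m+e_3|^2\le 8(1-m_3^2)$ pointwise; then $\max\{\lambda_0,Q-1\}\int_\Omega(1-m_3^2)$ is controlled by $\E(m)$ using \eqref{eq:Eapriori2} when $\lambda_0<Q-1$ and using Poincar\'e on $1-m_3^2$... which again needs $1-m_3^2\in W_0^{1,2}$, true since $m=-e_3$ on $\partial\Omega$. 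This last subtlety — that $1-m_3^2$ (equivalently $1+m_3$ near the relevant set) does vanish on $\partial\Omega$ and so Poincar\'e does apply — is the only genuinely delicate point, and once it is pinned down the rest is the routine covering estimate \eqref{eq:plan-cover}.
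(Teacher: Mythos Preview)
Your Vitali covering argument is fine and essentially equivalent to the paper's use of the weak-$L^1$ bound for the Hardy--Littlewood maximal function. The gap is in the mass bound for $\nu$. Your claimed pointwise inequality $|m+e_3|^2 \le 8(1-m_3^2)$ is \emph{false}: since $|m+e_3|^2 = 2(1+m_3)$ and $1-m_3^2=(1+m_3)(1-m_3)$, the inequality reduces (for $m_3>-1$) to $1\le 4(1-m_3)$, i.e.\ $m_3\le 3/4$, so it fails on $\{m_3>3/4\}$, a set that can have positive measure. (Your case split also mislocates the failure: $(1+m_3)^2\le 2(1-m_3^2)$ is equivalent to $m_3\le 1/3$, not to $m_3\ge -\tfrac12$.) In the regime $\lambda_0\ge Q-1$ the fix is elementary and is what the paper does: $m+e_3\in W_0^{1,2}(\Omega;\R^3)$, so Poincar\'e directly gives $\lambda_0\int_\Omega|m+e_3|^2\le\int_\Omega|\nabla m|^2\le 16\pi d$ with no pointwise comparison needed.

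In the regime $\lambda_0<Q-1$ there is no such easy fix, and this is precisely where $\beta$ enters --- contrary to your assertion that ``the factor $\beta$ will enter not here but through the interaction with $|\Omega|$ below'' (there is no other place for it to enter; your aside ``$\beta\ge 1$ always'' is also incorrect, as $\beta=d\kappa^2/(Q-1)$ can be arbitrarily small). The paper supplies two ingredients you are missing. First, combining \eqref{eq:Eapriori2} with the \emph{topological lower bound} $\int_\Omega|\nabla m|^2\ge 8\pi d$ (rather than merely dropping the Dirichlet term) sharpens your estimate to $\int_\Omega(1-m_3^2)\le Cd\,\alpha(Q,\kappa)/(Q-1)\le Cd\kappa^2/(Q-1)^2$. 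Second, a nonlinear interpolation estimate (quoted as \cite[Lemma~5.1]{bms:arma21}) gives $\int_\Omega(m_3+1)^2\le\frac{1}{4\pi}\int_\Omega|\nabla m|^2\int_\Omega(1-m_3^2)$, whence $(Q-1)\int_\Omega(m_3+1)^2\le Cd^2\kappa^2/(Q-1)=Cd\,\beta$. Since $|m+e_3|^2=(1-m_3^2)+(m_3+1)^2$, these combine to give $\max\{\lambda_0,Q-1\}\int_\Omega|m+e_3|^2\le C(1+\beta)d$ and hence $\nu(\Omega)\le C(1+\beta)d$, which is exactly what makes the hypothesis on $|\Omega|$ sharp.
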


We now proceed to the proofs of the above lemmas.

\begin{proof}[Proof of Lemma \ref{lemma:a_priori}]
  The construction in part ii) of the statement can be done in the
  same way as in \cite[Lemma 3.2]{mmss:cmp23}, inserting $d$ small,
  truncated BP profiles into the domain.

  In order to prove the statement in part i), we consider
  $\alpha >0$ to be determined and estimate using the Poincar{\'e},
  Cauchy-Schwarz, and Young inequalities
\begin{align}
 \begin{split}
   &\quad  \int_\Omega \left(\alpha |\nabla m|^2 -2\kappa m' \cdot
     \nabla m_{3} + (Q-1)  |m'|^2 \right) \intd x\\ 
   & \geq \alpha \| \nabla m_3\|_2^2 + \left(\alpha \lambda_0 + Q-1
   \right) \| m'\|_2^2 -2 \kappa \|m'\|_2 \| \nabla m_3\|_2 \\ 
   & \geq 2 \left( \sqrt{\alpha \left(\alpha \lambda_0 + Q-1 \right)}
     - \kappa \right) \|m'\|_2 \| \nabla m_3\|.
 \end{split}
\end{align}

The smallest $\alpha>0$ ensuring that the right hand side is
non-negative is given by the positive solution of the quadratic
equation
\begin{align}
  \lambda_0 \alpha^2 + (Q-1)\alpha - \kappa^2 =0,
\end{align}
which is given by $\alpha = \alpha(Q, \kappa)$, where
  $\alpha(Q, \kappa)$ is defined in \eqref{def:alpha}.  This proves
the estimate \eqref{eq:Eapriori}.  For
$\tilde \alpha = 2\alpha(Q, \kappa)$ we furthermore have
\begin{align}
  \lambda_0\tilde \alpha^2 + \frac{Q-1}{2} \tilde \alpha - \kappa^2 > 
  \lambda_0 \alpha^2 + (Q-1) \alpha - \kappa^2 = 0, 
\end{align}
similarly giving estimate \eqref{eq:Eapriori2}.
\end{proof}

\begin{proof}[Proof of Lemma \ref{lemma:alpha}]
  If $\lambda_0 \geq Q-1$, we use definition \eqref{def:alpha} to note
  that
  \begin{align}
    \alpha(Q,\kappa) \geq \frac{ 2\frac{\kappa^2}{\lambda_0}}{\sqrt{1+
    4\frac{\kappa^2}{\lambda_0}}+1}  = \frac12 \left( -1 + \sqrt{1
    + {4 \kappa^2 \over \lambda_0}} \right) ,
  \end{align}
  which implies that from $\alpha(Q,\kappa) \leq \frac{1}{2}$ it
  follows that $\frac{\kappa^2}{\lambda_0} \leq {\frac{3}{4}}$ and
    hence $\frac{2\kappa^2}{3\lambda_0}\leq \alpha(Q,\kappa)$.  If on
  the other hand we have $\lambda_0 \leq Q-1$, we note that the
    same type of algebra as above for $\alpha(Q, \kappa) \leq \frac{1}{2}$
    gives ${\kappa^2 \over Q-1} \leq \frac{3}{4}$ and
    ${2 \kappa^2 \over 3(Q - 1)} \leq \alpha(Q, \kappa)$, concluding
  the proof.
\end{proof}

We defer the proof of Proposition \ref{p:8pi} to the end of this
section, as it is independent of the proofs of the remaining lemmas,
and we can also take advantage of the construction in the proof of
Lemma \ref{lemma:construction}.

\begin{proof}[Proof of Lemma \ref{lemma:construction}]
  The strategy is to insert a truncated BP profile in some ball
  $B_\delta(x)$ in order to increase the degree while controlling the
  energy.

  \textit{Step 1. Construction of a cutoff.} Let
  $m \in \mathcal{A}_{d}$ satisfy \eqref{eq:cond}. Shifting domain if
  necessary, we may assume $x=0$, so that for all $\delta>0$ with
  $B_\delta \subset \Omega$, we have
\begin{align}\label{eq:ineqb}
  \int_{B_\delta}  \left( |\nabla \m|^2 + {\max\{\lambda_0,Q-1\} 
  } |m+e_3|^2 \right)\intd x \leq C \eps \kappa^2 \delta^2.
\end{align} 
By \cite[Theorems 4.19 and 4.21]{evans}, up to a redefinition on a set
of measure zero we have
$m |_{\partial B_r} \in H^1(\partial B_r;\mathbb{S}^2)$ with
$\nabla_\tau (m |_{\partial B_r}) = (\nabla_\tau m)|_{\partial B_r}$,
where $\nabla_\tau$ denotes the tangential derivative on the circle,
for almost all $0<r\leq\delta$. From estimate \eqref{eq:ineqb} and an
averaging argument we can therefore conclude that there exists a
circle of radius $\frac{3}{4}\delta \leq r_0 \leq \delta$ such that
\begin{align} \label{eq:ineqc} \int_{\partial B_{r_0}} \left(
    |\nabla_\tau \m |^2 + {\max\{\lambda_0,Q-1\} } |m+e_3|^2 \right)
  \intd \mathcal H^1 \leq C \eps \kappa^2 \delta.
\end{align} 
In particular, we immediately obtain
\begin{align}\label{eq:estimate_tangential}
  \int_{\partial B_{r_0}}\left(  |\nabla_\tau \m'|^2 + 
  {\max\{\lambda_0,Q-1\}  } |\m'|^2\right)  \intd \mathcal H^1
  \leq C \eps \kappa^2 \delta. 
\end{align}

Let now
$(\m')_{r_0} := \frac{1}{2\pi r_0} \int_{\partial B_{r_0}} m' \intd
\mathcal H^1$. Up to a redefinition on a set of $\mathcal H^1$ measure
zero, we have $m'|_{\partial B_{r_0}} \in C(\partial B_{r_0}; \R^2)$,
and for all $x,y \in \partial B_{r_0}$ we have by the fundamental
theorem of calculus and the Cauchy-Schwarz inequality:
\begin{align}
  | m'(x) - m'(y)|^2 \leq C |x-y| \int_{\partial B_{r_0}}
  |\nabla_\tau m'|^2 \intd \mathcal{H}^1. 
\end{align}
Choosing $y$ such that $m'(y) = (m')_{r_0}$ and using estimate
\eqref{eq:estimate_tangential}, for all
$x\in \partial B_{r_0}$ we obtain
\begin{align}\label{est:m0'}
|\m'(x) - (\m')_{r_0}|^2   \leq  C \eps \kappa^2  \delta^2,
\end{align}
from which for all $x\in \partial B_{r_0}$ we get by
  Jensen's inequality and estimate
\eqref{eq:estimate_tangential} that
\begin{align}
  |\m'(x)|^2   \leq  C \eps \kappa^2  \left(
  {\frac{1}{\max\{\lambda_0,Q-1\} }} +    \delta^2    \right). 
\end{align}
Choosing
\begin{align}
  \label{eq:delta}
  \delta \leq \frac{1}{\sqrt{\max\{\lambda_0,Q-1\} }}
\end{align}
and using inequality \eqref{eq:alpha_bound} and Lemma
\ref{lemma:alpha}, for all $x\in \partial B_{r_0}$ we thus have
\begin{align}\label{eq:inplane_small}
  |\m'(x)|^2   \leq   {\frac{C \eps \kappa^2 }{\max\{\lambda_0,Q-1\}
  }}  \leq C \eps. 
\end{align}

Now we want to show that $m_3$ is close to $-1$ on
$\partial B_{r_0}$.  Using the estimate \eqref{eq:ineqc}, we
have
\begin{align}
  \int_{\partial B_{r_0}}\left(  |\nabla_\tau (\m_3 +1)|^2 +
  {\max\{\lambda_0,Q-1\}  
  }  (1+m_3)^2  \right) \intd \mathcal H^1   \leq C \eps \kappa^2 \delta  .
\end{align} 
As in the preceding argument, for all
$ x\in \partial B_{r_0}$ it follows that
\begin{align}
|1+m_3(x)|^2   \leq C \eps.
\end{align} 
Therefore, assuming $\eps$ to be sufficiently small universal and
using $1+m_3 = \frac{|\m'|^2}{1-m_3}$, for all
$x\in \partial B_{r_0}$ we obtain
\begin{align}
|1+m_3(x)|   \leq  C \eps.
\end{align}

Inside $B_{r_0}$ we only have integral estimates on $\m$, but we want
to ensure that $|\m'|$ is small and $m_3$ is close to $-1$
pointwise. In order to achieve this, we will extend $\m'$ from
$\partial B_{r_0}$ inside $B_{r_0}$ in a controlled way, keeping it
small pointwise and not appreciably increasing its Dirichlet energy.
We then recover $m_3$ from the length constraint.

For
$x\in D_{r_0} := B_{r_0}\setminus
B_{r_0/2}$, let
\begin{align}
  v(x) := \left(\frac{2}{r_0}|x| - 1\right) \left(m'\left(r_0
  \frac{x}{|x|} \right) -(\m')_{r_0}\right). 
\end{align}
An explicit calculation together with estimate
\eqref{est:m0'} and assumption \eqref{eq:delta} yields
\begin{align}
  \label{eq:v2}
  \int_{D_{r_0}} |v|^2 \intd x \leq C r_0 \int_{\partial B_{r_0}}
  \left| m' -(\m')_{r_0} \right|^2 \intd \mathcal
  H^1 \leq \frac{C \eps
  \kappa^2\delta^2}{\max\{\lambda_0,Q-1\}}. 
\end{align}
Similarly, with the help of estimate \eqref{eq:estimate_tangential} a
decomposition into tangential and normal derivatives together with the
Poincar{\'e} inequality on $\partial B_{r_0}$ gives
\begin{align}
  \label{eq:dv2}
  \begin{split}
    \int_{D_{r_0}} |\nabla v|^2 \intd x & \leq C r_0 \int_{\partial
      B_{r_0}} \left( \left|\nabla_\tau m' \right|^2 +
      \frac{1}{r_0^2}\left| m' -(\m')_{r_0} \right|^2 \right)\intd
    \mathcal H^1 \\
    & \leq C r_0 \int_{\partial B_{r_0}} \left|\nabla_\tau m'
    \right|^2 \intd \mathcal H^1 \leq C \eps \kappa^2 \delta^2.
  \end{split}
\end{align}

We may now define $\tilde m'(x) := v(x) + (\m')_{r_0}$, which by the
first part of estimate \eqref{eq:inplane_small}, as well as estimates
\eqref{eq:v2} and \eqref{eq:dv2} satisfies
\begin{align}
  \int_{D_{r_0}} |\tilde  m'|^2 \intd x
  & \leq  \frac{C \eps \kappa^2
    \delta^2}{\max\{\lambda_0,Q-1\}},  \label{eq:annulus_anis}\\ 
  \int_{D_{r_0}} |\nabla \tilde m'|^2 \intd x
  & \leq C \eps \kappa^2 \delta^2. \label{eq:annulus_dir}
\end{align}
Using the second part of estimate \eqref{eq:inplane_small} and the
definition of $v$, we have $|v(x)|^2 \leq C \eps$ for any
$x\in D_{r_0}$, as well as $|(\m')_{r_0}|^2 \leq C \eps$.  Therefore,
it follows for all $x\in D_{r_0}$ that
\begin{align}\label{eq:estall_1}
	|\tilde \m'(x)|^2 \leq C  \eps.
\end{align}
Defining $\tilde m_3 = -\sqrt{1-|\tilde \m'|^2}$, for all
$x\in D_{r_0}$ we also get that
\begin{align}\label{eq:estall_2}
	|1+ \tilde m_3(x)| \leq C  \eps.
\end{align}

\textit{Step 2: Energy estimates for the cutoff.} We now want to
estimate the energy terms inside
$D_{r_0} = B_{r_0} \setminus B_{r_0/2}$.  By the estimates
\eqref{eq:annulus_dir} and \eqref{eq:estall_1}, as well as
$|\nabla \tilde m_3(x)|^2 \leq \frac{|\tilde \m'(x)|^2 |\nabla \tilde
  \m'(x)|^2}{1-|\tilde \m'(x)|^2}$ for a.e. $x \in D_{r_0}$, we obtain
for all $\eps >0$ universally small that
\begin{align}
  \label{eq:dm3}
  \int_{D_{r_0}} |\nabla \tilde m_3|^2 \intd x \leq 2 \int_{D_{r_0}}
  |\nabla \tilde \m'|^2 |\tilde \m'|^2 \intd x \leq  C \eps^2 \kappa^2
  \delta^2. 
\end{align}
Therefore, again by estimate \eqref{eq:annulus_dir} we have for
$\eps < 1$:
\begin{align}
  \label{eq:ineqc0}
  \int_{ D_{r_0}} |\nabla \tilde\m |^2
  \intd x \leq C \eps \kappa^2 \delta^2.
\end{align}

The anisotropy term is already controlled by estimate
\eqref{eq:annulus_anis}.  In order to estimate the DMI term, we note
that by the Cauchy-Schwarz inequality, together with estimates
\eqref{eq:estall_1} and \eqref{eq:dm3} we have
\begin{align}\label{eq:ineqc1}
 \int_{D_{r_0}} |\tilde \m' \cdot \nabla \tilde m_3 |
  \intd x \leq 
  \left( \int_{D_{r_0}} |\nabla \tilde m_3|^2 \intd x
  \right)^{1/2}\left( \int_{D_{r_0}}|\tilde \m'|^2 \intd x
  \right)^{1/2} \leq C \eps^{\frac32} \kappa \delta^2. 
\end{align}
\vskip 0.2cm

\textit{Step 3: Inserting a Belavin-Polyakov profile.}  Into the hole
$B_{r_0/2}$, we aim to insert a suitably truncated, rotated
Belavin-Polyakov profile $\phi$ of radius $\rho$ satisfying
$\phi(x) = -e_3$ for $x \in \partial B_{r_0/2}$.
	
Let us recall the construction of an appropriate Belavin-Polyakov
profile \cite{bms:arma21}. To this end, for $L \geq 2$ and $r >0$
we truncate the function
\begin{align}\label{def:f}
  f(r) := \frac{2r}{1+r^2}
\end{align}
via defining
\begin{align}
  f_L(r) := \begin{cases}
    f(r) & \text{ if } r \leq \frac{L}{2},\\
    2 f\left(\frac{L}{2}\right) \left(1 - L^{-1} r \right)& \text{ if
    } \frac{L}{2} < r \leq L, \\ 
    0 & \text{ if } L < r.
  \end{cases}
\end{align}
For $x \in \R^2$ this translates to a truncated Belavin-Polyakov
profile
\begin{align}
  \Phi_L(x) := \left(-f_L(|x|) \frac{x}{|x|},
  \operatorname{sign}(1-|x|) \sqrt{1- f^2_L(|x|)} \right), 
\end{align}
with the original Belavin-Polyakov profile being
\begin{align}
  \Phi(x) := \left( -\frac{2 x}{ 1 + |x|^2},
  \frac{1-|x|^2}{1+|x|^2} \right)  =  \lim_{L \to \infty} \Phi_L(x). 
\end{align}
	
In the spirit of estimates \cite[(A.66) to (A.71)]{bms:arma21}, we
obtain
\begin{align}\label{eq:bp_trunc_exch}
  \begin{split}
    \int_{\R^2} |\nabla \Phi_L|^2 \intd x & \leq \frac{8 \pi L^2}
    {4+L^2}
    + 2\pi \int_{\frac{L}{2}}^{L} r \left( \frac{4
        f^2\left(\frac{L}{2}\right)L^{-2}}{1-
        f^2\left(\frac{L}{2}\right) } + \frac{f^2_L(r)}{r^2}\right)
    \intd r \\ 
    & \leq 8\pi + CL^{-2},
  \end{split}
\end{align}
where $C>0$ is some universal constant.  In addition, we have
$\mathcal{N}(\Phi_L) =1$ for all $L \geq 2$, since
$\nabla \Phi_L \to \nabla \Phi$ in $L^2(\R^2)$ and $\Phi_L \to \Phi$
pointwise a.e.\ as $L \to \infty$, and $\mathcal{N}(\Phi_L)$ is a
  continuous function of $L \geq 2$ with values in $\mathbb N$.
	
Now let us modify this truncated profile by a suitable rotation to
match the boundary conditions
$ \phi_\rho=\left( (\m')_{r_0}, - \sqrt{1 - (\m')_{r_0}^2}\right)$ on
$\partial B_{r_0/2}$.  Let $R \in SO(3)$ be such that
\begin{align}
  -Re_3 = \left( (\m')_{r_0}, - \sqrt{1 - (\m')_{r_0}^2}\right),
\end{align}
which due to the estimate \eqref{eq:inplane_small} for $\m'$ we may
choose to satisfy
\begin{align}
  \label{eq:Rest}
  |R - \operatorname{id} |^2 \leq  {\frac{C  \eps \kappa^2
  }{\max\{\lambda_0,Q-1\} }} \leq C\eps ,
\end{align}
where $| \cdot |$ denotes the usual Frobenius norm.  Then for
$0< \rho \ll \delta$ and
$\phi_\rho(x) := R \Phi_{\frac{r_0}{2\rho}}(\rho^{-1}x)$ we have
$\mathcal{N}(\phi_\rho) = 1$ and
$\phi_\rho (x) = \left( (\m')_{r_0}, - \sqrt{1 -
    (\m')_{r_0}^2}\right)$ for all $x\in \partial B_{r_0/2}$.  Due to
the estimate \eqref{eq:bp_trunc_exch}, the rotation $R$ not affecting
the Dirichlet energy and recalling that
$\frac{3}{4}\delta \leq r_0 \leq \delta$, we also get
\begin{align}\label{eq:phi_exchange}
  \int_{\R^2} | \nabla \phi_\rho|^2 \intd x
  & \leq 8 \pi + \frac{C \rho^2}{\delta^2}. 
\end{align}

To estimate the DMI energy of $\phi_\rho$, we use the argument in the
proof of estimate \cite[(A.53)]{bms:arma21} to obtain
\begin{align}
  \int_{\R^2} \Phi'_L \cdot \nabla \Phi_{L,3} \intd x >
  \frac{1}{C}, 
\end{align}
for all $L$ sufficiently large universal.  Since $\Phi_L(x)+e_3$ and
$\Phi(x) +e_3$ decay as $\frac{1}{|x|}$ and $\nabla \Phi_L(x)$ and
$\nabla \Phi(x)$ decay as $\frac{1}{|x|^2}$ as $|x| \to \infty$, with
the help of estimate \eqref{eq:Rest} for $L$ sufficiently large and
$ \eps$ sufficiently small universal we also observe that
\begin{align}
  \int_{\R^2} (R \Phi_L)' \cdot \nabla (R \Phi_{L})_3
  \intd x > \frac{1}{C}. 
\end{align}
In total, this gives
\begin{align}
  \label{eq:phiDMIest}
  \int_{B_{\frac{r_0}{2}}} \phi_\rho' \cdot \nabla \phi_{\rho,3}
  \intd x & \geq \frac{\rho}{C},
\end{align}
for all $\eps$ and $\rho / \delta$ sufficiently small universal.

Additionally, due to
$|\phi_\rho'(x)|^2\leq C \left(\frac{\rho^2 |x|^2}{(\rho^2+ |x|^2)^2}
  +|R -\operatorname{id} |^2 \right)$ for $x\in B_{r_0/2}$ and
estimate \eqref{eq:Rest} we have
\begin{align}
  \label{eq:phi'est}
  \int_{B_{\frac{r_0}{2}}} |\phi'_\rho|^2 \intd x  \leq C\left(
  \rho^2 \log \frac{\delta}{\rho} + \  {\frac{\eps
  \kappa^2 \delta^2}{\max\{\lambda_0,Q-1\} }  } \right),
\end{align}
again for all $\eps$ and $\rho / \delta$ sufficiently small universal.

Putting together estimates \eqref{eq:phi_exchange},
  \eqref{eq:phiDMIest} and \eqref{eq:phi'est}, we arrive at
\begin{align}\label{eq:energy_phi}
  \E (\phi_\rho) \leq 8\pi +  C \left( \frac{\rho^2}{\delta^2} + (Q-1)
  \rho^2 \log \frac{\delta}{\rho}  \right) - \frac{\kappa \rho}{C} + C
  \eps \kappa^2 \delta^2. 
\end{align}

\textit{Step 4. Construction of a competitor.}  We now construct a
degree $d+1$ competitor $\bar\m$ by inserting $\phi_\rho$ into
$B_{r_0/2}$ via
\begin{align}
  \bar \m(x) = \begin{cases}
    \phi_{\rho} (x)& |x| \leq \frac{r_0}{2}, \\
    \tilde \m \left( x \right)  & \frac{r_0}{2}<|x|<r_0,\\
    \m(x) &   |x| \geq r_0,
\end{cases}
\end{align}
for $x\in \Omega$.  As on $\partial B_{r_0/2}$ we have
\begin{align}
	\phi_\rho=\left( (\m')_{r_0}, - \sqrt{1 - (\m')_{r_0}^2}\right)  = \tilde m,
\end{align}
the map $\bar \m$ is well defined in $H^1(\Omega; \mathbb S^2)$.

\textit{Step 5: Prove $\bar m \in \mathcal{A}_{d+1}$.} It is clear due
to construction that we have
\begin{align}
  \mathcal N (\bar \m) |_{B_{r_0/2}} := \frac{1}{4
  \pi}\int_{B_{r_0/2}}  \bar \m \cdot (\partial_1 \bar \m 
  \times \partial_2 \bar \m) \intd x =1.
\end{align}
Additionally, by estimates \eqref{eq:ineqb} and \eqref{eq:ineqc0} we
have
\begin{align}
\begin{split}
  \left|\mathcal N (\bar \m) - \mathcal N (\m) - \mathcal N (\bar \m)
    |_{B_{r_0/2}}\right| &= \left| \int_{D_{r_0}} \bar \m \cdot
    (\partial_1 \bar \m \times \partial_2 \bar \m) \intd x -
    \int_{B_{r_0}} \m \cdot (\partial_1 \m
    \times  \partial_2 \m) \intd x\right|  \\
  &\leq C \left( \int_{D_{r_0}} |\nabla \tilde \m|^2 \intd x
    +
    \int_{B_{r_0}} |\nabla \m|^2 \intd x \right) \\
  & \leq C \eps \kappa^2 \delta^2.
   \end{split}
  \end{align}
  As we know that $ \mathcal N (\m) =d$, in view of discreteness of
  the degree and the assumption \eqref{eq:alpha_bound} we deduce that
  $\mathcal N (\bar\m)=d+1$ for any choice of $\delta$ satisfying
  \eqref{eq:delta}, with $\eps >0$ small universal.

  \textit{Step 6: Conclusion.} We know that outside of $B_{r_0}$
  the maps $\m$ and $\bar\m$ coincide. Therefore, we just need to show
  that when restricted to $B_{r_0}$ we have
\begin{align}
\E  (\bar \m) |_{B_{r_0}} < \E (\m) |_{B_{r_0}} + 8 \pi.
\end{align}
Using estimates \eqref{eq:ineqb}, \eqref{eq:alpha_bound} and Lemma
\ref{lemma:alpha}, we know that inside $B_{r_0}$ we have
$\int_{B_{r_0}} |\m'|^2 \intd x \leq C \eps \delta^2$ and hence by the
Cauchy-Schwarz and Young inequalities
\begin{align}\label{eq:inside}
  \E (\m) |_{B_{r_0}} \geq \int_{B_{r_0}} |\nabla \m|^2 \intd x - 2
  \kappa  \int_{B_{r_0}} \nabla m_3 \cdot \m'  \intd x \geq -C
  \kappa^2  \int_{B_{r_0}} |\m'|^2  \intd x \geq -C  \eps \kappa^2
  \delta^2. 
\end{align}
Moreover, using estimates \eqref{eq:annulus_anis}, \eqref{eq:ineqc0},
\eqref{eq:ineqc1}, and \eqref{eq:energy_phi} we obtain
\begin{align}
\begin{split}
  \E (\bar \m) |_{B_{r_0}} &\leq 8\pi + C \left(
    \frac{\rho^2}{\delta^2} + (Q-1) \rho^2 \log \frac{\delta}{\rho}
  \right) - \frac{\kappa \rho}{C} + C \eps \kappa^2 \delta^2.
 \end{split}
\end{align}
Choosing $\rho= \kappa \delta^2/(2C^2)$, we arrive at
\begin{align}
  \E |_{B_{r_0}} (\bar \m)  \leq 8 \pi - \frac{\kappa^2
  \delta^2}{4C^3} + C(Q-1) \kappa^2 \delta^4 \ln \left(
  \frac{1}{\kappa \delta}\right) + C \eps \kappa^2 \delta^2. 
\end{align}
Taking $\eps>0$ small enough universal, $\delta>0$ small enough
depending on $\lambda_0$, $Q$ and $\kappa$ (it is enough to ensure
that in addition to \eqref{eq:delta} the quantities
  $\kappa \delta$, $(Q-1) \delta$ and $\delta \ln(1/(\kappa\delta))$
  are all sufficiently small universal), we arrive at
\begin{align}
  \E |_{B_{r_0}} (\bar \m) -\E (\m) |_{B_{r_0}} \leq 8 \pi -
  \frac{\kappa^2 \delta^2}{5C^3} + C\eps \kappa^2 \delta^2  <8\pi, 
\end{align}
which concludes the proof.
\end{proof}

\vskip 0.5cm

\begin{proof}[Proof of Lemma~\ref{lemma:covering}]
  Under our assumptions on $\kappa$, $Q$, and the energy, estimate
    \eqref{eq:Eapriori} of Lemma \ref{lemma:a_priori} implies
  that
\begin{align}\label{eq:upper_dirichelt}
	 \int_{\Omega} |\nabla m|^2 \intd x \leq 16 \pi d,
\end{align}
so that by the Poincar{\'e} inequality, we have
\begin{align}
	\lambda_0 \int_\Omega |m+e_3|^2 \intd x \leq 16 \pi d.
\end{align}
On the other hand, estimate \eqref{eq:Eapriori2} of Lemma
\ref{lemma:a_priori} and the topological lower bound
  \eqref{eq:tlb} give
\begin{align}
  8 \pi d \left(1 -2 \alpha(Q,\kappa)\right) + \frac{1}{2}
  \left(Q-1\right) \int_{\Omega} 
  \left(1-m_3^2\right) \intd x \leq 8 \pi d,
\end{align}
which by \eqref{est:al} then implies
\begin{align}
  \label{eq:aniQm12}
 \int_{\Omega}
  \left(1-m_3^2\right) 
  \intd x \leq  \frac{C d \alpha(Q,\kappa)}{(Q-1)} \leq  \frac{C d
  \kappa^2}{(Q-1)^2} .
\end{align}
Thus, in view of the fact that the extension of $m$ satisfies
  $m = -e_3$ outside $\Omega$, by \cite[Lemma 5.1]{bms:arma21} we
  obtain
\begin{align}\label{eq:mneqe3}
  \int_{\Omega} |m_3+1|^2 \intd x \leq \frac{1 }{ 4\pi} \int_{\Omega}
  |\nabla m|^2 \intd x  \int_\Omega \left(1- m_3^2\right)  \intd x
  \leq \frac{C d^2\kappa^2}{(Q-1)^2}. 
\end{align}
 In total, we get
\begin{align}
  \max\{\lambda_0,Q-1\}  \int_{\Omega} |m_3+1|^2 \intd x \leq C
  \beta(\kappa,Q,d)  d, 
\end{align}
where we recall that
\begin{align}
  \beta(\kappa,Q,d) = \begin{cases}
    1 & \text{ if } \lambda_0 \geq Q-1,\\
    \frac{ d\kappa^2}{Q-1} &  \text{ if } \lambda_0 < Q-1.
  \end{cases}
\end{align}

For $x \in \R^2$ we consider the Hardy-Littlewood maximal functions
\begin{align}
  M_1(x) & := \sup_{r>0} \frac{1}{\pi r^2} \int_{B_r(x)} |m + e_3|^2
           \intd y,\\ 
  M_2(x) & := \sup_{r>0} \frac{1}{\pi r^2} \int_{B_r(x)} |\nabla m |^2
           \intd y, 
\end{align}
which are well known to be bounded in weak $L^1$ if the original
functions are bounded in $L^1$, see \cite[Chapter 3, Theorem
1.1]{stein-shakarchi}.  Consequently, for all $t >0$, there exists a
universal constant $C_1>0$ such that
\begin{align}
  \left| \left\{ M_1(x) > t  \right\} \right|
  & \leq \frac{C_1 \beta(\kappa,
    Q,d) d }{ t
    \max \{\lambda_0, Q -
    1\}  },\\ 
  \left| \left\{ M_2(x) > t  \right\} \right|
  & \leq \frac{C_1  d }{ t}. 
\end{align}
Choosing
$t_1:= 3C_1 \frac{\beta(\kappa, Q,d) d}{|\Omega| \max \{\lambda_0, Q -
  1\} }$ and $t_2 := 3C_1 \frac{d}{|\Omega|}$ we get that
\begin{align}
  \left| \Omega \cap  \left\{ M_1(x) \leq  t_1  \right\} \right|
  & \geq \frac{2}{3} |\Omega|,\\
  \left| \Omega \cap  \left\{ M_2(x) \leq  t_2  \right\} \right|
  & \geq \frac{2}{3} |\Omega|.
\end{align}

Thus, there exists $x\in \Omega$ such that
\begin{align}
  M_1(x) & \leq 3C_1\frac{  \beta(\kappa, Q,d) d}{|\Omega| \max
           \{\lambda_0, Q - 1\} }
\end{align}
and
\begin{align}
  M_2(x) & \leq 3C_1 \frac{ d}{|\Omega|}.
\end{align}
The statement follows provided
$3C_1 \frac {\beta(\kappa, Q,d) d}{|\Omega|} \leq \eps \kappa^2$ and
$3C_1\frac{d}{|\Omega|}\leq \eps \kappa^2$, which is the case under
our assumption on $|\Omega|$ for $C_0=3 C_1$.
\end{proof}

\begin{proof}[Proof of Proposition \ref{p:8pi}]
  Letting $m \in \mathcal A_d$ and passing to the precise
  representative if necessary \cite[Theorem 4.19]{evans}, pick
  $x \in \Omega$ to be a point of continuity of $m$ that is also a
  Lebesgue point of $\nabla m$. Without loss of generality, we may
  assume that $x = 0$ and, hence,
  \begin{align}
    \label{eq:mdel2m0}
    \int_{B_\delta} |\nabla m|^2 \intd x \leq C \delta^2, \qquad
    m(0) = \lim_{|y| \to 0} m(y), \qquad |m(0)| = 1.
  \end{align}
  for some $C > 0$ independent of $\delta \ll 1$. 

  Up to a rigid rotation of $m$, we may assume for the moment that
  $m(0) = -e_3$.  Arguing as in the proof of Lemma
  \ref{lemma:construction}, we can find
  $r_0 \in \big( \frac34 \delta, \delta \big)$ such that
  $m |_{\partial B_{r_0}} \in H^1(\partial B_{r_0}; \mathbb S^2)$ and
  $|m' - (m')_{r_0}| \leq C \delta$ for some constant $C > 0$
  independent of $\delta \ll 1$. Furthermore, by continuity of $m$ at
  the origin we have $m_3(x) < 0$ for all $x \in \partial B_{r_0}$ and
  $\delta \ll 1$. Therefore, as in the proof of Lemma
  \ref{lemma:construction} we can define a cutoff $\tilde m_\delta$ in
  $D_{r_0}$ for every $\delta \ll 1$ such that $\tilde m_\delta = m$
  on $\partial B_{r_0}$,
  $\tilde m_\delta = \left( (m')_{r_0}, -\sqrt{1 - (m')_{r_0}^2}
  \right)$ on $\partial B_{r_0/2}$, and
  \begin{align}
    \label{eq:dm2del2}
    \int_{D_{r_0}} |\nabla \tilde m_\delta|^2 \intd x \leq C \delta^2,
  \end{align}
  for some $C > 0$ independent of $\delta \ll 1$. By the
  Cauchy-Schwarz inequality, we also have
  \begin{align}
    \label{eq:dmidel2}
    \int_{D_{r_0}} |\tilde m'_\delta \cdot \nabla \tilde m_{\delta,3}|
    \intd x \leq C \delta^2, 
  \end{align}
  for some $C > 0$ independent of $\delta \ll 1$. As the estimates in
  \eqref{eq:dm2del2} and \eqref{eq:dmidel2} are rotation-invariant,
  existence of an $\tilde m_\delta$ satisfying these estimates and
  interpolating from $m$ on $\partial B_{r_0}$ to a constant unit
  vector on $\partial B_{r_0/2}$ follows for an arbitrary value of
  $m(0)$.

  We now set $\rho_\delta = \delta^2$, and for $\delta \ll 1$ and
  $x \in B_{r_0/2}$ we define a truncated Belavin-Polyakov profile
  $\phi_{\rho_\delta}(x) = R \Phi_{\rho \over 2 r_0}(\rho_\delta^{-1}
  x)$, where $R \in SO(3)$ is a rotation satisfying
  $R e_3 = -\tilde m_\delta|_{\partial B_{r_0/2}}$. By an explicit
  calculation we get
  \begin{align}
    \label{eq:dphi2dmi}
    \int_{B_{r_0/2}} |\nabla \phi_{\rho_\delta}|^2 \intd x \leq 8 \pi + C
    \delta^2, \qquad \int_{B_{r_0/2}} |\phi_{\rho_\delta} ' \cdot \nabla
    \phi_{\rho_\delta,3}| \intd x \leq C \delta^2,
  \end{align}
  for some $C > 0$ independent of $\delta \ll 1$. 
  
  Finally, we construct a competitor
  $\bar m_\delta \in H^1(\Omega; \mathbb S^2)$ exactly as in Lemma
  \ref{lemma:construction}. Clearly, by
  \eqref{eq:mdel2m0},\eqref{eq:dm2del2} and the definition of
  $\phi_{\rho_\delta}$ we have $\bar m_\delta \in \mathcal A_{d+1}$
  for all $\delta \ll 1$. Furthermore, by
  \eqref{eq:mdel2m0}--\eqref{eq:dphi2dmi} and again the Cauchy-Schwarz
  inequality for the DMI term in $\E(m)$ we have
  \begin{align}
    \label{eq:EE8piCd2}
    \E(\bar m_\delta) \leq \E(m) + 8 \pi + C \delta^2,
  \end{align}
  for some $C > 0$ independent of $\delta \ll 1$. We then conclude by
  applying \eqref{eq:EE8piCd2} to a minimizing sequence
  $(m_n) \in \mathcal A_d$ and using a diagonal argument.
\end{proof}

\begin{remark}
  We note that contrary to Lemma \ref{lemma:construction}, the
  construction in the proof of Proposition \ref{p:8pi} does not allow
  one to conclude a strict inequality, since the rotation $R$ is a
  priori not close to identity and, hence, we may not pick a negative
  contribution from the DMI term. Furthermore, even if we were able to
  show that the competitor produces a net negative contribution to the
  DMI energy, we would not be able to conclude a priori that it beats
  the possible positive gain in the exchange energy due to the cutoff.
\end{remark}

\section{Proofs of the main theorems}
\label{s:proofs}

Now we conclude the proofs of Theorem \ref{thm:existence}, Proposition
\ref{p:reg}, and Theorem \ref{thm:asymptotics}.

\begin{proof}[Proof of Theorem \ref{thm:existence}]
  \textit{Step 1.  Ensuring the applicability of Lemma
    \ref{lemma:covering}.}  We first show that under assumption
  \eqref{eq:smallness_kappa_d} it is possible to find
  $\overline{C} > 0$ universal such that if assumption
  \eqref{eq:cond_omega_big} holds, then the condition on $|\Omega|$ of
  Lemma~\ref{lemma:covering} with $\eps > 0$ from Lemma
  \ref{lemma:construction} is satisfied.  That is, we need to ensure
  that
  $|\Omega| \geq C_0\left( \beta(\kappa,Q,d) + 1 \right)
  \frac{d}{\eps\kappa^2}$ with $\eps>0$ universal from Lemma
  \ref{lemma:construction}, $\beta(\kappa,Q,d)$ defined in
  \eqref{eq:beta}, and $C_0 > 0$ being a universal constant from
  Lemma~\ref{lemma:covering}.

  If $\lambda_0 \geq Q-1$, then $\beta(\kappa, Q, d) = 1$, and
  using inequality \eqref{eq:cond_omega_big} we have
\begin{align}
  |\Omega| \geq \frac{\overline{C}  d}{\kappa^2} = C_0 \left(
  \beta(\kappa,Q,d) +  1 \right) 
  \frac{d}{\eps\kappa^2},
\end{align}
if $\overline{C} = {2 C_0/\eps}$.  Otherwise, we have
$\lambda_0 < Q-1$.  Using condition \eqref{eq:smallness_kappa_d} and
Lemma \ref{lemma:alpha}, we have
\begin{align}
  \frac{2}{d} \geq \alpha(Q,\kappa) \geq \frac{2 \kappa^2}{3(Q-1)}, 
\end{align}
and therefore
$\beta(\kappa, Q, d) = \frac{d \kappa^2}{Q-1} \leq
3$. Consequently, invoking the inequality \eqref{eq:cond_omega_big},
we have
\begin{align}
  |\Omega| \geq \frac{\overline{C}  d}{\kappa^2} \geq C_0
  \left( \beta(\kappa,Q,d)  +1 \right) 
  \frac{d}{\eps \kappa^2},
\end{align}
if $\overline{C}= 4 C_0/\eps$.

\textit{Step 2. Convergence of minimizing sequences and lower
  semicontinuity.}  Let $(\m_n) \in \mathcal{A}_d$ be a minimizing
sequence.  By Lemma \ref{lemma:a_priori}, since inequality
\eqref{eq:alpha_bound} holds by assumption, we get that $(\m_n)$ is
uniformly bounded in $H^1(\Omega;\mathbb{S}^2)$.  Consequently, there
exists a subsequence (not relabeled) and
$\m_\infty \in H^1(\Omega; \mathbb{S}^2)$ such that
$\m_n \to \m_\infty $ in $L^2$ and
$\nabla \m_n \rightharpoonup \nabla \m_\infty$ in $L^2$ as
$n \to \infty$.  Furthermore, by a weak-times-strong argument, we get
$\int_{\Omega} \m_n' \cdot \nabla m_{n,3} \intd x \to \int_{\Omega}
\m_\infty' \cdot \nabla m_{\infty,3} \intd x$ and, therefore, we have
\begin{align}
  \E(\m_\infty) \leq \liminf_{n\to \infty} 
  \E (\m_n) = \inf_{\mathcal A_d} \E. 
\end{align}
Thus, it remains to prove that $m_\infty \in \mathcal A_d$, i.e., that
$\mathcal N(m_\infty) = d$.
        
Arguing as in \cite{brezis83a,melcher14} and \cite[Lemma
A.3]{bms:arma21}, we complete the squares to get for all
$\m \in H^1(\Omega; \mathbb{S}^2)$ that
\begin{align}
  \label{eq:squares}
  \int_\Omega |\nabla \m|^2 \intd x  \pm 8\pi \mathcal N (\m) 
  &= \int_\Omega |\partial_1 \m \mp \m \times \partial_2 \m|^2 \intd x.
\end{align}
As a result, by the lower semicontinuity of the right-hand side in
\eqref{eq:squares} and the continuity of the DMI and anisotropy terms
we have
\begin{align}
  \label{eq:ineq1}
  \E(\m_\infty) \pm 8\pi
  \mathcal{N}(\m_\infty) 
  & \leq 
    \liminf_{n \to \infty} \E(m_n)  \pm 8\pi d. 
\end{align}

\textit{Step 3.  Proving $1 \leq \mathcal{N}(m_\infty) \leq d$.}
With the help of
Lemma~\ref{lemma:a_priori} we know that $\E(\m_\infty) \geq 0$ and
$ \inf_{\mathcal A_d} \E < 8\pi d$. Therefore, using
\eqref{eq:ineq1} with the ``--'' sign'', we obtain
$ - 8\pi \mathcal{N}(\m_\infty) <0$, implying
$\mathcal{N}(\m_\infty) \geq 1$.

The inequality \eqref{eq:ineq1} with the ``+'' sign, together
with Lemma \ref{lemma:a_priori} and the topological bound
\eqref{eq:tlb} yield
\begin{align}
  8\pi (2 - \alpha(Q,\kappa)) \mathcal{N}(m_\infty) < 16\pi d.
\end{align}
Therefore, under assumption \eqref{eq:smallness_kappa_d} we have
\begin{align}
  \mathcal{N}(m_\infty) < \frac{2d}{2-\alpha(Q,\kappa)} \leq d+1.
\end{align}
Thus, by discreteness of the degree we have
$\mathcal N(m_\infty) \leq d$. Note that this is the only place in our
argument where we crucially need $\alpha(Q, \kappa)$ to be bounded
proportionally to $d^{-1}$.

\textit{Step 4.  Proving $\mathcal{N}(m_\infty) = d$.} If $d=1$,
  we are done now, producing a minimizer of $\E$ over $\mathcal A_1$
  (compare with \cite[Theorem 2.1]{mmss:cmp23} in the case $Q =
  1$). So in the following we may assume $d\geq 2$.

We argue by contradiction and assume
$N: = {\mathcal N}(\m_\infty) <d$. Using estimate \eqref{eq:ineq1}, we
have
\begin{align}\label{eq:contr}
  \inf_{\mathcal A_N} \E + 8\pi \left(d -N \right)&\leq \inf_{\mathcal A_d} \E.
\end{align}
Since $1 \leq N <d$ and we already have existence for $d=1$, we can
use induction and assume that the minimum of $\E$ over
  $\mathcal A_{d'}$ is attained for all $1 \leq d' < d$, with
  $\min_{\mathcal A_{d'}} \E < 8 \pi d'$ by Lemma
  \ref{lemma:a_priori}. Then applying first Lemma
  \ref{lemma:covering}, then Lemma~\ref{lemma:construction} repeatedly
  to the minimizers of $\E$ in $\mathcal A_{d'}$ for $N \leq d' < d$,
we obtain
\begin{align}
  \inf_{\mathcal
  A_d} \E <  \min_{\mathcal A_N} \E + 8\pi \left(d - N \right). 
\end{align}
However, this contradicts estimate \eqref{eq:contr}, and, therefore,
the assumption $N < d$ is false, proving the assertion of the theorem.
\end{proof}

  \begin{proof}[Proof of Proposition \ref{p:reg}]
    Arguing as in \cite{schoen82}, let
    $\xi \in C_c^\infty (\Omega; \R^3)$. Then, for $|t| < t_0$ with
    $t_0>0$ small enough, we have
  \begin{align}
    m^t:= \frac{m + t \xi}{|m +t \xi|} \in \mathcal A_d. 
  \end{align}
  Indeed, for $i=1,2$, we get $|m^t| = 1$ a.e. in
    $\Omega$, $m^t = -e_3$ on $\partial \Omega$ and
  \begin{align}
    \label{eq:mt}
    m^t & = m + t \left( \xi - (\xi\cdot m) m \right) + O(t^2),\\
    \partial_i m^t & = \partial_i m + t \left( \partial_i \xi -
                     \left(m \cdot \partial_i \xi + \xi \cdot
                     \partial_i m\right) m  - (\xi \cdot m) \partial_i
                     m \right) + O(t^2 (1 + |\nabla m|)), 
  \end{align}
  for a.e. $x \in \Omega$, where the constants in the $O$-notation
  depend on $\xi$, but not on $m$.  In particular, we have
  $m^t \in H^1(\Omega; \mathbb S^2)$ for all $|t|<t_0$ and $m^t$ is
  continuous in $H^1(\Omega; \R^3)$ at $t = 0$. Therefore, by
  continuity of the degree in $H^1(\Omega; \R^3)$ we have
  $\mathcal N(m^t) = d$ in a sufficiently small neighborhood of
  $t = 0$.

    We now compute the derivative of $\E(m^t)$ at $t = 0$. Rewriting
  the anisotropy term as $(Q-1) (1- m_3^2)$ and interpreting
  $\nabla m_3 = (\partial_1 m_3, \partial_2 m_3 ,0)$, by
    minimality of $m$ we have
  \begin{align}
    \begin{split}
      0 = \left. \frac{\intd }{\intd t} \E (m^t) \right |_{t=0} & =2
      \int_\Omega \left( \nabla m : \nabla \xi - |\nabla m|^2 m
        \cdot \xi \right) \intd x \\
      &\quad + 2 \kappa \int_\Omega \left( (\nabla \cdot m')(e_3 - m_3
        m )-
        (\nabla m_3 - (m \cdot \nabla m_3) m \right) \cdot \xi \intd x \\
      & \quad - 2 (Q-1) \int_\Omega \left( m_3 e_3 - m_3^2 m \right)
      \cdot \xi \intd x.
  \end{split}
  \end{align}
  This gives the distributional version of equation \eqref{eq:EL}.
  
  As the lower order terms in this equation are all $L^2$-integrable,
  standard regularity theory for harmonic maps in $\R^2$
  \cite{helein90,coifman93,chang99} implies that
  $m \in C^\infty(\Omega ; \mathbb{S}^2)$, with continuity up to the
  boundary established in \cite{mueller09}. Specifically, a result by
  M{\"u}ller and Schikorra, \cite[Theorem 1.1, Remark 1.4, and
  Corollary 1.6]{mueller09}, implies that $m$ is locally H{\"o}lder
  continuous for some $\gamma \in (0,1)$ and, if $\Omega$ is
  additionally simply connected with a $C^{1,\alpha}$ boundary, also
  continuous up to the boundary. H\"older continuity of the
  derivatives of $m$ in the interior then follows from the arguments
  of Giaquinta \cite{giaquinta} (for the specifics, see \cite[Section
  3]{chang99}), with smoothness then following from the bootstrap
  argument of the standard elliptic regularity theory \cite{gilbarg}.
\end{proof}

\vskip 0.2cm

\begin{proof}[Proof of Theorem \ref{thm:asymptotics}]
  By inequality \eqref{est:al} the condition
  \eqref{eq:smallness_kappa_d} is satisfied for $Q_n\gg1$ and,
  therefore, existence of a minimizer $\m_n \in \mathcal A_d(\Omega)$
  follows from Theorem \ref{thm:existence} for all $n$ large enough.
	
  By Lemma \ref{lemma:a_priori} and the Poincar{\'e} inequality, the
  sequence $(m_n + e_3)$ is bounded in $W^{1,2}(\R^2;\R^3)$ after
  constant extension by $-e_3$ outside of $\Omega$.  Indeed, using
  Lemma~\ref{lemma:a_priori} we have
  \begin{align}
    \lim_{n \to \infty} \int_{\R^2} |\nabla
    \m_n|^2 \intd x = 8 \pi d, 
  \end{align}
  since $\alpha(Q_n,\kappa) \to 0$ by inequality \eqref{est:al}.  For
  $x\in \R^2$, let
  \begin{align}
    \Phi(x) := \begin{pmatrix}
      -\frac{2x}{1+|x|^2} & \frac{1-|x|^2}{1+|x|^2}
    \end{pmatrix}.
  \end{align}
  Then by conformal invariance of the Dirichlet energy in two
  dimensions, see for example \cite[Lemma A.2]{bms:arma21}, also
  $\hat m_n : \mathbb{S}^2 \to \mathbb{S}^2$ defined as
  $\hat m_n := m_n \circ \, \Phi^{-1}$ satisfies
  \begin{align}
    \label{eq:limharmd}
    \lim_{n \to 0} \int_{\mathbb{S}^2} |\nabla \hat
    m_n|^2 \intd \mathcal{H}^2 = 8\pi d. 
  \end{align}
  Consequently, in view of the topological lower bound \eqref{eq:tlb}
  it is a minimizing sequence of the Dirichlet energy among maps from
  $\mathbb S^2$ to $\mathbb S^2$ of degree $d$.
	
  Additionally, by Proposition \ref{p:reg} we have
  $m_n \in C(\R^2;\mathbb{S}^2)$ and thus
  $\hat m_n \in C(\mathbb{S}^2;\mathbb{S}^2)$.  By \cite[Theorem
  1'']{lin99}, there exists a subsequence, a weakly harmonic map
  $\hat m_\infty \in W^{1,2}(\mathbb{S}^2;\mathbb{S}^2)$ and a
  non-negative Radon measure $\nu$ on $\mathbb{S}^2$ such that
  \begin{align}
    \hat m_n \rightharpoonup \hat m_\infty
  \end{align}
  in $W^{1,2}(\mathbb{S}^2;\mathbb{S}^2)$, and for
  $d \hat\mu_n := |\nabla \hat m_n|^2 \intd \mathcal{H}^2$ and
  $d \hat\mu_\infty := |\nabla \hat m_\infty|^2 \intd \mathcal{H}^2$ we
  have
  \begin{align}
    \hat\mu_n
    \stackrel{*}{\rightharpoonup} \hat\mu_\infty + \nu ,
  \end{align}
  as measures when $n \to \infty$. By \cite[Theorem 5.8]{lin99}, there
  exist $k \in \N \cup \{0\}$, $z_1,\ldots, z_k \in \mathbb{S}^2$, and
  $d_1,\ldots, d_k \in \N$ such that
  \begin{align}
    \nu = \sum_{j=1}^k 8\pi d_j \delta_{z_j},
  \end{align}
  with the convention that $\nu = 0$ if $k = 0$.	
	
  Turning our attention to $\hat m_\infty$, we observe that because it
  is a stationary point of the Dirichlet energy on the sphere, it must
  be an energy-minimizing map in its own homotopy class, a fact that
  was first independently proved by Lemaire \cite{lemaire78} and Wood
  \cite{wood74}, see also \cite[(11.5)]{eells78}.  However, since
  $\hat m_\infty (x) = -e_3$ for
  $x\in \mathbb{S}^2 \setminus \Phi(\Omega)$, we must have that
  $\hat m_\infty = -e_3$ on the entirety of $\mathbb{S}^2$, which
  follows from \cite[(8.4)]{lemaire78}.
	
  We thus have
  \begin{align}
    \label{eq:8piddzj}
    \hat\mu_n
    \stackrel{*}{\rightharpoonup}\sum_{j=1}^k 8\pi d_j
    \delta_{z_j},
  \end{align}
  as measures when $n \to \infty$.  In particular, from convergence
  \eqref{eq:limharmd} it follows that $k \not= 0$ and
  $\sum_{j=1}^k d_j = d$.  Since $|\nabla \hat m_n|^2 (x) = 0$ for all
  $x \not \in \Phi(\overline \Omega)$, we must have
  $z_1,\dots, z_k \in \Phi \left( \overline{\Omega}\right)$.
	
  Pulling back these statements to the plane by precomposing with
  $\Phi$, we obtain the convergence of the exchange energy density to
  the sum of delta measures as in \eqref{eq:8piddxj}. At the same
  time, by estimate \eqref{eq:aniQm12} that applies to $m_n$ we get
  that the rest of the terms in the energy go to zero in the sense of
  measures as $n \to \infty$. This gives the desired result.
\end{proof}

\paragraph{Conflict of interest.} The Authors declare no conflict of
interest.

\paragraph{Data availability.} The manuscript contains no associated
data.





\end{document}